\begin{document}
\newtheorem{theorem}{Theorem}
\newtheorem{proposition}[theorem]{Proposition}
\newtheorem{conjecture}[theorem]{Conjecture}
\def\theconjecture{\unskip}
\newtheorem{corollary}[theorem]{Corollary}
\newtheorem{lemma}[theorem]{Lemma}
\newtheorem{sublemma}[theorem]{Sublemma}
\newtheorem{observation}[theorem]{Observation}
\theoremstyle{definition}
\newtheorem{definition}{Definition}
\newtheorem{notation}[definition]{Notation}
\newtheorem{remark}[definition]{Remark}
\newtheorem{question}[definition]{Question}
\newtheorem{questions}[definition]{Questions}
\newtheorem{example}[definition]{Example}
\newtheorem{problem}[definition]{Problem}
\newtheorem{exercise}[definition]{Exercise}

\numberwithin{theorem}{section} \numberwithin{definition}{section}
\numberwithin{equation}{section}

\def\earrow{{\mathbf e}}
\def\rarrow{{\mathbf r}}
\def\uarrow{{\mathbf u}}
\def\varrow{{\mathbf V}}
\def\tpar{T_{\rm par}}
\def\apar{A_{\rm par}}

\def\reals{{\mathbb R}}
\def\torus{{\mathbb T}}
\def\heis{{\mathbb H}}
\def\integers{{\mathbb Z}}
\def\naturals{{\mathbb N}}
\def\complex{{\mathbb C}\/}
\def\distance{\operatorname{distance}\,}
\def\support{\operatorname{support}\,}
\def\dist{\operatorname{dist}\,}
\def\Span{\operatorname{span}\,}
\def\degree{\operatorname{degree}\,}
\def\kernel{\operatorname{kernel}\,}
\def\dim{\operatorname{dim}\,}
\def\codim{\operatorname{codim}}
\def\trace{\operatorname{trace\,}}
\def\Span{\operatorname{span}\,}
\def\dimension{\operatorname{dimension}\,}
\def\codimension{\operatorname{codimension}\,}
\def\nullspace{\scriptk}
\def\kernel{\operatorname{Ker}}
\def\ZZ{ {\mathbb Z} }
\def\p{\partial}
\def\rp{{ ^{-1} }}
\def\Re{\operatorname{Re\,} }
\def\Im{\operatorname{Im\,} }
\def\ov{\overline}
\def\eps{\varepsilon}
\def\lt{L^2}
\def\diver{\operatorname{div}}
\def\curl{\operatorname{curl}}
\def\etta{\eta}
\newcommand{\norm}[1]{ \|  #1 \|}
\def\expect{\mathbb E}
\def\bull{$\bullet$\ }

\def\xone{x_1}
\def\xtwo{x_2}
\def\xq{x_2+x_1^2}
\newcommand{\abr}[1]{ \langle  #1 \rangle}

\newcommand{\Norm}[1]{ \left\|  #1 \right\| }
\newcommand{\set}[1]{ \left\{ #1 \right\} }
\def\one{\mathbf 1}
\def\whole{\mathbf V}
\newcommand{\modulo}[2]{[#1]_{#2}}
\def \essinf{\mathop{\rm essinf}}
\def\scriptf{{\mathcal F}}
\def\scriptg{{\mathcal G}}
\def\scriptm{{\mathcal M}}
\def\scriptb{{\mathcal B}}
\def\scriptc{{\mathcal C}}
\def\scriptt{{\mathcal T}}
\def\scripti{{\mathcal I}}
\def\scripte{{\mathcal E}}
\def\scriptv{{\mathcal V}}
\def\scriptw{{\mathcal W}}
\def\scriptu{{\mathcal U}}
\def\scriptS{{\mathcal S}}
\def\scripta{{\mathcal A}}
\def\scriptr{{\mathcal R}}
\def\scripto{{\mathcal O}}
\def\scripth{{\mathcal H}}
\def\scriptd{{\mathcal D}}
\def\scriptl{{\mathcal L}}
\def\scriptn{{\mathcal N}}
\def\scriptp{{\mathcal P}}
\def\scriptk{{\mathcal K}}
\def\frakv{{\mathfrak V}}
\def\C{\mathbb{C}}
\def\R{\mathbb{R}}
\def\Rn{{\mathbb{R}^n}}
\def\Sn{{{S}^{n-1}}}
\def\M{\mathbb{M}}
\def\N{\mathbb{N}}
\def\Q{{\mathbb{Q}}}
\def\Z{\mathbb{Z}}
\def\F{\mathcal{F}}
\def\L{\mathcal{L}}
\def\S{\mathcal{S}}
\def\supp{\operatorname{supp}}
\def\dist{\operatorname{dist}}
\def\essi{\operatornamewithlimits{ess\,inf}}
\def\esss{\operatornamewithlimits{ess\,sup}}
\author{Heng Gu}
\address{Heng Gu
\\
School of Mathematical Sciences
\\
Beijing Normal University
\\
Laboratory of Mathematics and Complex Systems
\\
Ministry of Education
\\
Beijing 100875
\\
People's Republic of China
}
\email{henggu@mail.bnu.edu.cn}

\author{Qingying Xue}
\address{
        Qingying Xue\\
        School of Mathematical Sciences\\
        Beijing Normal University \\
        Laboratory of Mathematics and Complex Systems\\
        Ministry of Education\\
        Beijing 100875\\
        People's Republic of China}
\email{qyxue@bnu.edu.cn}
\author{K\^{o}z\^{o} Yabuta}
\address{
        K\^oz\^o Yabuta\\
        Research Center for Mathematical Sciences\\
         Kwansei Gakuin University\\
         Gakuen 2-1\\
         Sanda 669-1337\\
        Japan}
\email{kyabuta3@kwansei.ac.jp}
\thanks{The authors were supported partly by NSFC
(No. 11471041 and 11671039), the Fundamental Research Funds for the Central Universities (NO. 2014KJJCA10) and NCET-13-0065. The third author
was supported partly by Grant-in-Aid for Scientific Research (C)
Nr. 15K04942, Japan Society for the Promotion of Science.\\ \indent Corresponding
author: Qingying Xue\indent Email: qyxue@bnu.edu.cn}
\keywords{Paraproducts; dyadic shifts; Haar multipliers; commutators of Haar multipliers; continuity; compactness.}
\title[On Some properties of dyadic operators ]{\textbf{ On Some properties of dyadic operators}}
\date{\today}
\maketitle

\begin{abstract}
In this paper, the objects of our investigation are some dyadic operators, 
including dyadic shifts, multilinear paraproducts and multilinear Haar
 multipliers. We mainly focus on the continuity and compactness of these 
 operators. First, we consider the continuity properties of these operators. 
 Then, by the Fr\'{e}chet-Kolmogorov-Riesz-Tsuji theorem, the 
 non-compactness properties of these dyadic operators will be studied. 
 Moreover, we show that their commutators are compact with \textit{CMO} 
 functions, which is quite different from the non-compaceness properties of 
 these dyadic operators. 
 These results are similar to those for Calder\'on-Zygmund  singular integral 
 operators. 
\end{abstract}


\section{Introduction}
It is well known that the dyadic operators, such as paraproducts, Haar 
multipliers and dyadic shifts, play very important roles in Harmonic Analysis. 
The study of paraproducts may be traced back to the famous work of Bony in 
\cite{MB}. Since then, many works had been done in this field. Among those 
achievements is the celebrated work of David and Journ\'{e} \cite{DJ}. 
Using the techniques of paraproducts, David and Journ\'{e} established the 
$T(1)$ theorem and thus gave a boundedness criterion for generalized 
Calder\'{o}n-Zygmund operators. 
The investigation of Haar multipliers may be dated back to the 
$A_2$ conjecture for Haar multipliers consider by Wittwer in \cite{W}. 
Subsequently, using the combination of Bellman function technique and heat 
extension, Petermichl and Volberg extended the same result to Beurling-Ahlfors 
transforms in \cite{PV}. As for the dyadic shifts, it is known that an 
elementary dyadic shift with parameter $(m, n)$ ($m,n\in\mathbb{N}$) is 
an operator given by
\begin{equation}\aligned
\mathbb Sf(x)&=\sum_{I\in \mathcal{D}}\frac{1}{|I|}\int_{I}a_I(x,y)f(y)dy
=\sum_{I\in \mathcal{D}}
\sum_{\substack{I^{'},I^{''}\in\mathcal{D},I^{'},I^{''}\subset I
\\l(I^{'})=2^{-m}l(I)\\ l(I^{''})=2^{-n}l(I)}}
\frac{1}{|I|}\langle f,h_{I^{'}}\rangle h_{I^{''}}
\endaligned
\end{equation}
where $h_{I^{'}}$ and $h_{I^{''}}$ are Haar functions for the cubes $I^{'}$ 
and $I^{''}$ respectively in $\mathbb{R}^{d}$, subject to normalization
$
\|h_{I^{'}}\|_{\infty}\cdot\|h_{I^{''}}\|_{\infty}\leq 1
$
and
\begin{align*}
a_{I}=\sum_{\substack{I^{'},I^{''}\in\mathcal{D},I^{'},I^{''}\subset I
\\l(I^{'})=2^{-m}l(I), l(I^{''})=2^{-n}l(I)}}h_{I^{'}}(y)h_{I^{''}}(x)
\end{align*}
The number $r=\max(m, n)$ is called the complexity of the dyadic shift. 
There are two important works in the earlier stage of investigation. The first 
one is given in \cite{NTV} which concerned with the boundedness of dyadic 
shifts.
The second one is given by Lacey, Petermichl and Reguera \cite{LPR} which 
demonstrates the $A_2$ conjecture for general dyadic shifts. A recent nice 
work  \cite{HPTV} states that an arbitrary Calder\'{o}n-Zygmund operator can 
be presented as an average of random dyadic shifts and random dyadic 
paraproducts. This demonstrates the importance of the dyadic shifts and 
people are beginning to pay more attention to these operators. 

Still more recently, the following multilinear dyadic paraproducts 
$\pi_{b}^{\vec{\alpha}}$, Haar multipliers $ P^{\vec{\alpha}}$ and 
$T_{\epsilon}^{\vec{\alpha}}$ have been introduced and studied by 
Kunwar \cite{IK1}.
\begin{align}
\pi_{b}^{\vec{\alpha}}(\vec{f})(x
)=\sum_{I\in D}\frac{\langle b,h_I\rangle}{|I|}\biggl(
\prod_{j=1}^{m}\frac{\langle f_j, h_{I}^{1+\alpha_{j}}\rangle}{|I|}\biggr) 
h_{I}^{1+\sigma(\vec{\alpha})}, \quad 
\vec{\alpha}=(\alpha_1,\cdots,\alpha_m)\in\{0,1\}^m,
\end{align}
\begin{align}
 P^{\vec{\alpha}}(\vec{f})(x)=\sum_{I\in D}\biggl(
 \prod_{j=1}^{m}\frac{\langle f_j, h_{I}^{1+\alpha_{j}}\rangle}{|I|}\biggr) 
 h_{I}^{\sigma(\vec{\alpha})}, \quad 
 \vec{\alpha}\in\{0,1\}^m\setminus\{1,\cdots,1\},\end{align}
\begin{align}
T_{\epsilon}^{\vec{\alpha}}(\vec{f})(x)
=\sum_{I\in \mathcal{D}}\epsilon_{I}\biggl(
\prod_{j=1}^{m}\frac{\langle f_j, h_{I}^{1+\alpha_{j}}\rangle}{|I|}\biggr) 
h_{I}^{\sigma(\vec{\alpha})}, \quad 
\vec{\alpha}\in\{0,1\}^m\setminus\{1,\cdots,1\},
\end{align}
where
$b\in \textit{BMO}^d$, and 
$\epsilon=\{\epsilon_I\}_{I\in\mathcal{D}} $ is bounded 
and $\sigma(\vec{\alpha})$ is denoted to be the number of 0 components in 
$\vec{\alpha}$. 

In \cite{IK1}, Kunwar investigated the strong and weak type boundedness 
properties of $\pi_{b}^{\vec{\alpha}}$ and its commutators.
Moreover, Kunwar  \cite{IK1}  demonstrated that
\begin{align*}
f_1\cdots f_m=\sum_{\vec{\alpha}\in\{0,1\}^m\backslash\{(1,\cdots,1)\}}
P^{\vec{\alpha}}(\vec{f}),\quad \hbox{for}\ f_j\in L^{p_j}(\mathbb{R}).
\end{align*}
If $1<p_1,\cdots,p_m<\infty$ with 
$\frac{1}{p}=\sum_{i=1}^m\frac{1}{p_i}$ and $b\in \textit{BMO}^d$,  
Kunwar \cite{IK2} showed that the Haar 
multipliers and their commutators enjoy the properties that
\begin{align*}
T_{\epsilon}^{\vec{\alpha}}:
 L^{p_1}\times L^{p_2}\times\cdots\times L^{p_m}\rightarrow L^{p}
\end{align*}
and
\begin{align*}
[b, T_{\epsilon}^{\vec{\alpha}}]_j: 
L^{p_1}\times L^{p_2}\times\cdots\times L^{p_m}\rightarrow L^{p}, 
\quad \hbox{for}\ j=1,\cdots, m,
\end{align*}
where $[b, T_{\epsilon}^{\vec{\alpha}}]_j$ is denoted to be the commutator of 
$T_{\epsilon}^{\vec{\alpha}}$ in the $j$-th entry. 

This paper will be devoted to investigated the continuity and compactness of the above dyadic type operators, including their commutators. First, we consider the continuity properties of them and get the following result.\par
\begin{theorem} [Continuity of dyadic operators]\label{Continuity} 
The following statements hold:\par
\noindent$\mathrm{(i)}$ Let $|\nabla f|\in L^{\infty}(\mathbb{R}^d)$. 
Then $\mathbb Sf(x)$ is almost everywhere continuous.\par
\noindent$\mathrm{(ii)}$ 
Let $\vec{\alpha}\in \{0,1\}^m\setminus\{(1,\cdots,1)\}$ and 
$\epsilon=\{\epsilon_{I}\} $ be bounded sequence.  Suppose that $f_j^{'}$ is 
bounded when $\alpha_j=0$ and $f_j$ is bounded when $\alpha_j=1$ in 
$\mathbb{R}$. Then $\pi_{b}^{\vec{\alpha}}(\vec{f})(x)$ and 
$T_{\epsilon}^{\vec{\alpha}}(\vec{f})(x)$ are almost everywhere continuous.
\end{theorem}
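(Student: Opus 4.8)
The plan is to exploit the fact that each of these operators is a sum over dyadic cubes $I \in \mathcal{D}$ of terms that are \emph{locally constant off a small set}, together with quantitative decay of the summands, so that the tail of the series is uniformly small. Recall that a Haar function $h_I$ (or its tensor powers $h_I^{1+\alpha_j}$, $h_I^{\sigma(\vec\alpha)}$) is constant on each dyadic child of $I$; hence every finite partial sum $\sum_{\ell(I) \ge 2^{-N}} (\cdots)$ is a step function, continuous except on the (measure-zero) grid $\bigcup_{I} \partial I$. The heart of the matter is therefore to show that the error made by truncating the series converges to $0$ \emph{locally uniformly} (not merely in $L^p$), since a locally uniform limit of functions that are continuous off a fixed null set is again continuous off that null set.

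For part $\mathrm{(i)}$, fix $x$ not on the dyadic grid and a bounded neighborhood. For a dyadic cube $I$ the coefficient $\frac{1}{|I|}\langle f, h_{I'}\rangle$ with $\ell(I') = 2^{-m}\ell(I)$ can be rewritten, using $\int h_{I'} = 0$, as $\frac{1}{|I|}\langle f - c, h_{I'}\rangle$ for any constant $c$; taking $c = f(x_{I'})$ and invoking $|\nabla f| \in L^\infty$ gives $|\langle f, h_{I'}\rangle| \lesssim \|\nabla f\|_\infty \ell(I')^{1+d/2}$, so the $I$-th term of $\mathbb{S}f$ has size $\lesssim \|\nabla f\|_\infty \, \ell(I)^{1} \cdot (\text{normalization})$ with an extra geometric gain $2^{-m}$ in the side length. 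Since at each scale $2^{-k}$ there is exactly one cube containing $x$ and the relevant summands at that scale are supported near $x$, the contribution of scales $\ell(I) \le 2^{-N}$ is bounded by $C\|\nabla f\|_\infty \sum_{k \ge N} 2^{-k} \to 0$; the tail is locally uniformly small, and each partial sum is continuous off the grid, so $\mathbb{S}f$ is a.e.\ continuous.

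For part $\mathrm{(ii)}$, the argument is the same in spirit. Consider $T_{\epsilon}^{\vec\alpha}(\vec f)$ (the paraproduct $\pi_b^{\vec\alpha}$ is handled identically, absorbing the bounded-BMO factor $\langle b, h_I\rangle/|I|$ which one estimates crudely, or with more care using that $\vec\alpha \ne (1,\dots,1)$ forces at least one genuinely oscillating Haar factor). For each index $j$: if $\alpha_j = 1$ the factor is $\langle f_j, h_I\rangle/|I|$ with $\int h_I = 0$, so as above $|\langle f_j, h_I\rangle| \lesssim \|f_j'\|_\infty$-type bound — wait, here we are told $f_j$ itself is bounded, so $|\langle f_j, h_I^2\rangle|/|I| \lesssim \|f_j\|_\infty$ with no decay; if $\alpha_j = 0$ the factor is the \emph{average} $\langle f_j, h_I^2\rangle/|I|$ over $I$ times something, and boundedness of $f_j'$ gives oscillation control $\lesssim \|f_j'\|_\infty \ell(I)$. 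One must check that, after multiplying the $m$ factors and the output Haar factor $h_I^{\sigma(\vec\alpha)}$, and summing the at-most-one cube per scale that contains the point $x$, there is a net geometric decay in $\ell(I)$: this comes from every coordinate $j$ with $\alpha_j = 0$ contributing a factor $\lesssim \ell(I)$ (since $\sigma(\vec\alpha) \ge 1$ there is at least one such $j$), which dominates the (at worst constant) contribution from the $\alpha_j = 1$ coordinates and the output factor. Hence the tail $\sum_{\ell(I) \le 2^{-N}}$ is again locally uniformly $O(2^{-N})$.

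The main obstacle is the bookkeeping in part $\mathrm{(ii)}$: one must verify that the combined normalization of the Haar factors, together with the single cube per scale through $x$, really does leave a summable geometric series after all cancellations are used — in particular, handling the paraproduct case where the $\langle b, h_I\rangle/|I|$ coefficients are only $\ell^2$-controlled (square-function / $\mathrm{BMO}^d$ bound) rather than pointwise small, so one should pair that Cauchy–Schwarz estimate against the $\ell(I)$-decay coming from the $\sigma(\vec\alpha) \ge 1$ null-integral factor. Once the locally uniform convergence of the truncated series is established, the conclusion that the sum is continuous off the dyadic grid $\bigcup_I \partial I$ (a null set) is immediate.
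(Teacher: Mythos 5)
Your proposal is correct and follows essentially the same route as the paper: truncate at a fixed small scale, use the cancellation of the Haar functions plus the mean value theorem (for $\nabla f$ in the shift case, and for the coordinates with $\alpha_j=0$, where $\sigma(\vec\alpha)\ge 1$ guarantees at least one factor of $\ell(I)$) to make the small-scale tail uniformly of size $O(2^{-N})$, and observe that the large-scale part is locally constant, hence continuous, off the dyadic grid. Your worry about the paraproduct coefficients being only square-function controlled is unnecessary: since $\|h_I\|_\infty\le 1$ one has the pointwise bound $|\langle b,h_I\rangle|/|I|\le \|b\|_{\textit{BMO}^d}$, which is exactly the crude estimate the paper uses.
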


\noindent{\bf {Remark 1.1.}}
For dyadic paraproducts $\pi_{b}^{\vec{\alpha}}(\vec{f})(x)$, when 
$\vec{\alpha}=\{(1,\cdots,1)\}$, then $\pi_{b}^{\vec{\alpha}}(\vec{f})(x)$ is 
also almost everywhere continuous if $b^{'}(x)$ is bounded and for all $f_j$ 
is bounded in $\mathbb{R}$. The square of the Littlewood-Paley square function 
$Sf(x)=\Big(\sum_{I\in\mathcal{D}}\big(\frac{\langle f,h_I\rangle}{|I|}\big)^2
\chi_{I}\Big)^{1/2}$ and Haar multipliers $P^{\vec{\alpha}}(\vec{f})(x)$ are 
special cases of $T_{\epsilon}^{\vec{\alpha}}(\vec{f})(x)$. Therefore, 
they are also almost everywhere continuous.\par
There are many results about the compactness of the non-dyadic operators. 
For example, \cite{AU} and \cite{WS} are some nice works in the earlier stage. Recently, the authors in \cite{BT}, \cite{DMX} studied the compactness of 
bilinear operators and their commutators. But there is no compactness or non-compactness results for dyadic operators. Thus, it is quite natural to ask whether these dyadic operators are compact or not. Below, we will give a negative answer to this question. 
\begin{theorem}[Noncompactness of dyadic operators]\label{noncompaceness}
\noindent$\mathrm{(i)}$ 
Let $\epsilon=\{\epsilon_{I}\} $ be a bounded sequence and suppose that
there exists a constant $A>0$ such that 
$\#\{I\in\mathcal D: |\epsilon_I|\ge A\}=\infty$. Let
$\frac1{p}=\frac1{p_1}+\cdots+\frac1{p_m}$ with 
$1<{p_1},\cdots,{p_m}<\infty$. Then $T_{\epsilon}^{\vec{\alpha}}$ 
is not a compact operator from $L^{p_1}(\mathbb{R})\times\cdots\times 
L^{p_m}(\mathbb{R})$ to $L^p(\mathbb{R})$ for 
$\vec{\alpha}\in \{0,1\}^m\setminus\{(1,\cdots,1)\}$.\par
\noindent$\mathrm{(ii)}$ 
Let $m,n\in\mathbb{N}$ and 
suppose that there exists a constant $A>0$ such that 
\begin{align*}
&\#\{I\in\mathcal D: A\le\|h_{I^{'}}\|_{\infty}\cdot\|h_{I^{''}}\|_{\infty}
\leq 1\text{ for some }I^{'},I^{''}\in\mathcal{D},I^{'},I^{''}\subset I,
\\
&\hspace{7cm}l(I^{'})=2^{-m}l(I),l(I^{''})=2^{-n}l(I)\}=\infty.
\end{align*}
Then, 
dyadic shift with parameters $(m, n)$ is not a compact operator. 
\end{theorem}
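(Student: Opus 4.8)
The plan is to prove non-compactness in both parts by producing a bounded sequence of inputs whose images fail to be precompact in $L^p$, the failure being read off from the Fr\'{e}chet-Kolmogorov-Riesz-Tsuji theorem. The common device is to use the hypothesis to extract infinitely many distinct dyadic cubes $I_1,I_2,\dots$ at which the pertinent coefficient has modulus $\ge A$, and to attach to each $I_k$ a normalized input, supported on $I_k$, that forces the operator to collapse onto the single scale $I_k$.

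For part (i): since $\vec\alpha\ne(1,\dots,1)$ I would fix an index $j_0$ with $\alpha_{j_0}=0$ and take cubes $I_k$ with $|\epsilon_{I_k}|\ge A$; let $f_{j_0}^{(k)}$ be the multiple of the cancellative Haar function $h_{I_k}$ of unit $L^{p_{j_0}}$-norm, and for $j\ne j_0$ let $f_j^{(k)}$ be the normalized multiple of $h_{I_k}$ if $\alpha_j=0$ and of $\chi_{I_k}$ if $\alpha_j=1$. Orthogonality of the Haar system makes the $j_0$-th pairing $\langle f_{j_0}^{(k)},h_I^{1+\alpha_{j_0}}\rangle=\langle f_{j_0}^{(k)},h_I\rangle$ vanish unless $I=I_k$, so only that term of $T_\epsilon^{\vec\alpha}(\vec f^{(k)})$ survives and a routine computation gives $T_\epsilon^{\vec\alpha}(\vec f^{(k)})=c_k\,h_{I_k}^{\sigma(\vec\alpha)}$ with $|c_k|\asymp|\epsilon_{I_k}|\,|I_k|^{-1/p}$, so that $\|T_\epsilon^{\vec\alpha}(\vec f^{(k)})\|_p\asymp|\epsilon_{I_k}|\ge A$. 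For part (ii): for each admissible $I_k$ I would pick $I_k',I_k''\subset I_k$ with $l(I_k')=2^{-m}l(I_k)$, $l(I_k'')=2^{-n}l(I_k)$ and $\|h_{I_k'}\|_\infty\|h_{I_k''}\|_\infty\ge A$, and set $f_k=h_{I_k'}/\|h_{I_k'}\|_p$; the mean-zero property and Haar orthogonality force $\langle f_k,h_{I'}\rangle\ne0$ only for $I'=I_k'$, hence pin the outer index to $I=I_k$, so $\mathbb S f_k$ is a scalar multiple of $\sum_{I''\subset I_k,\ l(I'')=2^{-n}l(I_k)}h_{I''}$, and since these $I''$ are pairwise disjoint, isolating the $I_k''$-piece and using $\|h_{I_k'}\|_\infty\|h_{I_k''}\|_\infty\ge A$ yields $\|\mathbb S f_k\|_p\ge c(d,m,n,p)>0$. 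In either part, writing $S$ for the operator and $F_k$ for the $k$-th input, the family $\{F_k\}$ is bounded in its domain, $SF_k$ is supported on $I_k$ with $\|SF_k\|_p\ge c_0>0$, and from these formulas one reads off the pointwise bound $|SF_k|\lesssim|I_k|^{-1/p}$.

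It then remains to show that $\{SF_k\}_k$ is not precompact in $L^p$, and this is the crux, since the hypothesis gives infinitely many cubes $I_k$ with no control on their size or position. If $\inf_k|I_k|=0$, I would pass to a subsequence with $|I_k|\to0$ and translate by $l(I_k)\mathbf e_1$: then $I_k$ and its translate are disjoint, so $\|SF_k(\cdot+l(I_k)\mathbf e_1)-SF_k\|_p=2^{1/p}\|SF_k\|_p\ge2^{1/p}c_0$ while $l(I_k)\to0$, contradicting the equicontinuity condition of Fr\'{e}chet-Kolmogorov-Riesz-Tsuji. If $\sup_k|I_k|=\infty$, pass to a subsequence with $|I_k|\to\infty$; then $\int_{|x|\le R}|SF_k|^p\lesssim|I_k|^{-1}|B_R|\to0$ for every fixed $R$, so the uniform lower bound $\|SF_k\|_p\ge c_0$ throws a fixed positive fraction of the mass of $SF_k$ into $\{|x|>R\}$ for all large $k$, contradicting the tail condition. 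In the remaining case $0<\inf_k|I_k|\le\sup_k|I_k|<\infty$ only finitely many dyadic scales occur, so on a subsequence the scale is constant; infinitely many distinct cubes of a fixed scale must escape to spatial infinity, so $SF_k$ is supported in $\{|x|>R\}$ for $k$ large, again contradicting the tail condition. In every case $\{SF_k\}$ is not precompact, hence $S$ is not compact. The hard part will be handling these three geometric scenarios (cubes shrinking to a point, blowing up, or running off to infinity) uniformly; on the technical side one must keep track of the several Haar normalizations so that $\|F_k\|$ stays equal to $1$ while $\|SF_k\|_p$ stays bounded below independently of $k$, and the exclusion $\vec\alpha\ne(1,\dots,1)$ enters precisely to secure the cancellative entry that makes the inputs isolate the single scale $I_k$ in part (i).
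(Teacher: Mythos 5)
Your proposal is correct and follows essentially the same route as the paper: the same single-cube test functions $f_{I,j}=|I|^{-1/p_j}h_I^{1+\alpha_j}$ (cancellative entry secured by some $\alpha_{j_0}=0$) giving $\|T_\epsilon^{\vec\alpha}(\vec f_I)\|_p=|\epsilon_I|$, and noncompactness read off from the failure of condition (b) or (c) of the Fr\'echet--Kolmogorov--Riesz--Tsuji theorem, with small translations killing (c) when the cubes shrink and escape of mass killing (b) otherwise. The only differences are bookkeeping — your trichotomy on the geometry of the cubes (shrinking, growing, or running off to infinity at fixed scale) versus the paper's dichotomy on whether large coefficients persist near spatial infinity, your version being if anything slightly more complete — and that you actually carry out the dyadic-shift case $\mathrm{(ii)}$ with the inputs $h_{I_k'}/\|h_{I_k'}\|_p$, which the paper dismisses as "the same way" and omits.
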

There also exists $b\in L^\infty\subset \textit{BMO}$ such that 
$\pi_{b}^{\vec \alpha}$ is not a compact operator. However, 
for $b\in \textit{CMO}$, it can be shown that 
$\pi_{b}^{\vec{\alpha}}$ is a compact operator. Consequently, we get
\begin{theorem}[Compactness of $\pi_{b}^{\vec{\alpha}}$]\label{compactness-pi}
Let $b\in \textit{CMO}$ and $\frac1{p}=\frac1{p_1}+\cdots+\frac1{p_m}$ with 
$1<{p_1},\cdots,{p_m}<\infty$. Then $\pi_{b}^{\vec{\alpha}}$ is a compact 
operator from $L^{p_1}(\mathbb{R})\times\cdots\times L^{p_m}(\mathbb{R})$ to 
$L^p(\mathbb{R})$ for $\vec{\alpha}\in \{0,1\}^m$.
\end{theorem}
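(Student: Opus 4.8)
The plan is to reduce to the case where the symbol $b$ is a finite linear combination of Haar functions, to settle that case by a finite‑rank argument, and then to pass to the limit. Two facts are used. First, by the very formula defining $\pi_b^{\vec\alpha}$, the map $b\mapsto\pi_b^{\vec\alpha}$ is \emph{linear} in the symbol, and by Kunwar's boundedness theorem it is bounded, which in quantitative form reads $\|\pi_b^{\vec\alpha}\|_{L^{p_1}\times\cdots\times L^{p_m}\to L^p}\lesssim\|b\|_{\textit{BMO}^d}$. Second, an operator‑norm limit of compact multilinear operators is again compact: if $\pi_b^{\vec\alpha}$ is the uniform limit of compact operators, then the image of the product of the unit balls under $\pi_b^{\vec\alpha}$ is totally bounded, being uniformly approximated by the totally bounded images under the approximants.

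First I would invoke the description of $\textit{CMO}$ as the $\textit{BMO}$‑closure of the compactly supported, mean‑zero functions adapted to $\mathcal D$, equivalently of the finite linear combinations of Haar functions; thus for each $\delta>0$ there is a finite collection $\mathcal F\subset\mathcal D$ and a symbol $b_\delta=\sum_{I\in\mathcal F}\langle b_\delta,h_I\rangle h_I$ with $\|b-b_\delta\|_{\textit{BMO}^d}<\delta$. Linearity in the symbol gives $\pi_b^{\vec\alpha}-\pi_{b_\delta}^{\vec\alpha}=\pi_{b-b_\delta}^{\vec\alpha}$, whence $\|\pi_b^{\vec\alpha}-\pi_{b_\delta}^{\vec\alpha}\|\lesssim\|b-b_\delta\|_{\textit{BMO}^d}<C\delta$. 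So $\pi_b^{\vec\alpha}$ is a uniform limit of the $\pi_{b_\delta}^{\vec\alpha}$, and it suffices to show that each $\pi_{b_\delta}^{\vec\alpha}$ is compact.

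Next I would note that for the finite symbol $b_\delta$,
\[
\pi_{b_\delta}^{\vec\alpha}(\vec f)=\sum_{I\in\mathcal F}\frac{\langle b_\delta,h_I\rangle}{|I|}\Bigl(\prod_{j=1}^{m}\frac{\langle f_j,h_I^{1+\alpha_j}\rangle}{|I|}\Bigr)h_I^{1+\sigma(\vec\alpha)}
\]
is a finite sum in which the scalar multiplying each fixed function $h_I^{1+\sigma(\vec\alpha)}$ depends in a bounded, multilinear way on $(f_1,\dots,f_m)$. Hence $\pi_{b_\delta}^{\vec\alpha}$ takes values in the fixed finite‑dimensional subspace $V_{\mathcal F}=\operatorname{span}\{h_I^{1+\sigma(\vec\alpha)}:I\in\mathcal F\}\subset L^p(\mathbb R)$, and, being bounded, it sends the product of the closed unit balls of $L^{p_1},\dots,L^{p_m}$ into a bounded, hence relatively compact, subset of $V_{\mathcal F}$; so $\pi_{b_\delta}^{\vec\alpha}$ is compact. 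Letting $\delta\to0$ and invoking the second fact above finishes the argument.

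The routine parts are the finite‑rank step and the limiting step — soft functional analysis that works uniformly for every $\vec\alpha\in\{0,1\}^m$. The hard part, and the only place where the hypothesis $b\in\textit{CMO}$ (as opposed to merely $b\in L^\infty\subset\textit{BMO}$) is needed, is the first step: one must have the quantitative operator bound $\|\pi_c^{\vec\alpha}\|\lesssim\|c\|_{\textit{BMO}^d}$ together with the density of finite Haar sums in $\textit{CMO}$ in the $\textit{BMO}$ norm, so that a $\textit{BMO}$‑small perturbation of the symbol yields an operator‑norm‑small perturbation of the paraproduct; this is exactly what fails for a general $b\in L^\infty$, in accordance with the remark preceding the statement.
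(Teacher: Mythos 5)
Your proof is correct, but it follows a genuinely different route from the paper's. Both arguments start from the same soft reduction (linearity of $b\mapsto\pi_b^{\vec\alpha}$, the quantitative bound $\|\pi_c^{\vec\alpha}\|\lesssim\|c\|_{\textit{BMO}^d}$ from Kunwar's boundedness theorem, and the stability of compactness under operator-norm limits), but the paper uses it only to replace $b\in \textit{CMO}$ by $b\in C_c^\infty(\mathbb R)$ with $\supp b\subset(-1,1)$, and then verifies conditions (b) and (c) of the Fr\'echet--Kolmogorov--Riesz--Tsuji theorem by hand: geometric decay of the tail $\sum_{I=[0,2^\ell),[-2^\ell,0),\ \ell\ge k}$ for condition (b), and for condition (c) a scale-by-scale estimate trading $\|b\|_\infty$ (large $|I|$) against $\|b'\|_\infty$ (small $|I|$) to get a modulus of continuity of order $|h|^{1/p}$ or $|h|^a$. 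You push the symbol approximation one step further, to a finite Haar sum $b_\delta$, after which compactness is immediate because $\pi_{b_\delta}^{\vec\alpha}$ is a bounded multilinear map of finite rank into $\operatorname{span}\{h_I^{1+\sigma(\vec\alpha)}:I\in\mathcal F\}$; this avoids the translation and decay estimates entirely and treats all $\vec\alpha\in\{0,1\}^m$ uniformly. One small correction: $\textit{CMO}$ is \emph{not} equal to the $\textit{BMO}$-closure of finite Haar sums (a single Haar function has unit mean oscillation at arbitrarily small scales near its jumps, so it does not lie in $\textit{CMO}$); what you actually need, and what is true, is the one inclusion, which follows since for $b\in C_c^\infty$ the truncations $E_{2^{-N}}b-E_{2^{N}}b$ of the Haar expansion are finite Haar sums converging to $b$ uniformly, hence in $\textit{BMO}$ and a fortiori in $\textit{BMO}^d$, which is the norm controlling the operator norm. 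With that fixed, your argument is complete; what the paper's more laborious approach buys is a set of explicit tail and translation estimates that are reused almost verbatim in the proof of Theorem 1.4 for the commutators $[b,T_\epsilon^{\vec\alpha}]_i$ and $[b,\mathbb S]$, where no finite-rank reduction is available because the compactness there comes from the commutator cancellation rather than from the symbol alone.
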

Nevertheless, like in \cite{BT} and \cite{DMX} for many non-dyadic operators, 
they may be not compact but their commutators and iterated commutators can be 
compact. Therefore, we try to figure out whether the commutators and the iterated 
commutators of these dyadic operators are compact or not. First, following the usual definition of commutators $[b,T](f)=bT(f)-T(bf)$, we define the iterated commutators
of Haar multipliers $T_{\epsilon,\Pi\textbf{b}}^{\vec{\alpha}}$ by
\begin{align}
T_{\epsilon,\Pi\textbf{b}}^{\vec{\alpha}}(\vec{f})
&=[b_1,[b_2,\cdots[b_{m-1}[b_m, 
T_{\epsilon}^{\vec{\alpha}}]_m]_{m-1}\cdots]_2]_1(\vec{f}).
\end{align}
 We formulate the results for the compactness of the commutators as follows:
\begin{theorem}[Compactness of commutators]\label{compaceness-commutators}
Let $\epsilon=\{\epsilon_{I}\} $ be a bounded sequence and 
$\frac1{p}=\frac1{p_1}+\cdots+\frac1{p_m}$ with 
$1<{p_1},\cdots,{p_m}<\infty$. The following statements hold:\par
\noindent$\mathrm{(i)}$ 
Let $b\in \textit{CMO}$. Then  $[b,T_{\epsilon}^{\vec{\alpha}}]_{i}$ is a compact 
operator from $L^{p_1}(\mathbb{R})\times\cdots\times L^{p_m}(\mathbb{R})$ to 
$L^p(\mathbb{R})$ for all $\vec{\alpha}\in \{0,1\}^m\setminus\{(1,\cdots,1)\}$ 
and $1\leq i\leq m$.\par
\noindent$\mathrm{(ii)}$ 
Let $\vec{b}=(b_1, \cdots, b_m)\in \textit{CMO}^m$. Then 
$T_{\epsilon,\Pi\textbf{b}}^{\vec{\alpha}}$ is a compact operator from 
$L^{p_1}(\mathbb{R})\times\cdots\times L^{p_m}(\mathbb{R})$ to 
$L^p(\mathbb{R})$ for $\vec{\alpha}\in \{0,1\}^m\setminus\{1, \cdots, 1\}$.\par
\noindent$\mathrm{(iii)}$ Let $b\in CMO$. Then $[b,\,\mathbb S]$ is a compact operator from $L^{p}(\mathbb{R}^d)$ to $L^p(\mathbb{R}^d)$. \par
\end{theorem}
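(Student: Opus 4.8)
The plan is to derive all three parts from one scheme: after a density reduction that replaces the symbol by a finite linear combination of Haar functions, I will exhibit each commutator as a limit, in the operator norm, of finite-rank operators; compactness then follows, since finite-rank multilinear operators are compact and the class of compact operators is closed under operator-norm limits. The device that replaces the smoothness estimates available for Calder\'on--Zygmund operators is an exact \emph{cancellation of the small cubes}: if a symbol $b$ is constant on a dyadic cube $I$, then the $I$-block of $[b,T_{\epsilon}^{\vec\alpha}]_i\vec f$, of $T_{\epsilon,\Pi\textbf{b}}^{\vec\alpha}\vec f$, and of $[b,\mathbb S]f$ all vanish identically. Indeed $h_I^{1+\alpha_j}$, $h_I^{\sigma(\vec\alpha)}$ (respectively $h_{I'},h_{I''}$ for $\mathbb S$) are supported in $I$, so on $I$ one pulls the constant $b$ out of every Haar pairing and out of the output function, and the defining alternating sums collapse; for the iterated commutator this is $\sum_{S\subseteq\{1,\dots,m\}}(-1)^{|S|}=0$.

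For part (i): given $b\in\textit{CMO}$ and $\eta>0$, write $b=b_0+r$ with $b_0$ a finite linear combination of Haar functions and $\|r\|_{\textit{BMO}^d}<\eta$, which is possible since such combinations are dense in $\textit{CMO}$. By the boundedness of the commutators in \cite{IK2}, $\|[r,T_{\epsilon}^{\vec\alpha}]_i\|\lesssim\|\epsilon\|_\infty\|r\|_{\textit{BMO}^d}$, so by the closedness of the class of compact operators it suffices to treat $b=b_0$. Let $2^{-N_0}$ be the smallest side length in the Haar expansion of $b_0$, so $b_0$ is constant on every dyadic cube of side $\le 2^{-N_0-1}$, and fix $R_0$ with $\operatorname{supp} b_0\subseteq B(0,R_0)$. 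By the cancellation above the $I$-block of $[b_0,T_{\epsilon}^{\vec\alpha}]_i\vec f$ vanishes unless $l(I)\ge 2^{-N_0}$, and it vanishes unless $I\cap\operatorname{supp} b_0\neq\emptyset$; moreover, for such $I$ that block lies, as $\vec f$ varies, in the two-dimensional space $\operatorname{span}\{h_I^{\sigma(\vec\alpha)},\,b_0 h_I^{\sigma(\vec\alpha)}\}$. Hence, for $M>N_0$, the partial sum $T_M$ over the finitely many cubes $I$ with $2^{-N_0}\le l(I)\le 2^M$ and $I\cap\operatorname{supp} b_0\neq\emptyset$ has finite rank, and it remains only to show $\|[b_0,T_{\epsilon}^{\vec\alpha}]_i-T_M\|\to 0$ as $M\to\infty$. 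For this one rewrites the $I$-block as $\epsilon_I\bigl(\prod_{j\neq i}\tfrac{\langle f_j,h_I^{1+\alpha_j}\rangle}{|I|}\bigr)\tfrac{1}{|I|}\,h_I^{\sigma(\vec\alpha)}(x)\,\langle(b_0(x)-b_0(\cdot))f_i,h_I^{1+\alpha_i}\rangle$, estimates each Haar coefficient by H\"older (using the standard size bounds for the $h_I^{(\cdot)}$ and the boundedness of $\operatorname{supp} b_0$), and obtains that the $L^p$-norm of the $I$-block is $\lesssim_{b_0,\epsilon,\vec p}|I|^{-\gamma}\prod_j\|f_j\|_{p_j}$ for some $\gamma=\gamma(m,\vec p)>0$; since only $O(1)$ cubes meet $\operatorname{supp} b_0$ at each (large) scale, summing over scales $>2^M$ gives $\|[b_0,T_{\epsilon}^{\vec\alpha}]_i-T_M\|\lesssim 2^{-\gamma M}\to 0$. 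Thus $[b_0,T_{\epsilon}^{\vec\alpha}]_i$ is a norm limit of finite-rank operators, hence compact.

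Parts (ii) and (iii) follow the same three steps. For (ii), reduce each $b_k$ to a finite Haar combination using multilinearity of $T_{\epsilon,\Pi\textbf{b}}^{\vec\alpha}$ in $(b_1,\dots,b_m)$ and the boundedness of the iterated commutator; the identity $\sum_S(-1)^{|S|}=0$ gives the small-cube cancellation, the surviving $I$ satisfy $l(I)\ge 2^{-N_0}$ and $I\cap\bigcup_k\operatorname{supp} b_k\neq\emptyset$, the truncation to $l(I)\le 2^M$ has finite rank (range in $\operatorname{span}\{(\prod_{k\in S}b_k)h_I^{\sigma(\vec\alpha)}:S\subseteq\{1,\dots,m\}\}$), and the tail estimate is identical in spirit, the extra symbols contributing only bounded constants; alternatively one may induct on $m$. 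For (iii), run the same argument for the linear operator $[b,\mathbb S]$ on $L^p(\mathbb R^d)$, using the classical boundedness $\|[b,\mathbb S]\|_{L^p\to L^p}\lesssim\|b\|_{\textit{BMO}^d}$ in the density step; since $I',I''\subseteq I$ forces a finite Haar combination $b_0$ that is constant on $I$ to be constant on $I'$ and on $I''$, the two halves $b_0\,\mathbb S f$ and $\mathbb S(b_0 f)$ agree on the $I$-block, so only $I$ with $l(I)\ge 2^{-N_0}$ and $I\cap\operatorname{supp} b_0\neq\emptyset$ survive; truncation to $l(I)\le 2^M$ has finite rank, and for the large cubes $I\supseteq\operatorname{supp} b_0$ the normalization $\|h_{I'}\|_\infty\|h_{I''}\|_\infty\le 1$ together with H\"older gives that the $L^p$-norm of the $I$-block is $\lesssim_{b_0}|I|^{-\gamma}\|f\|_p$ for some $\gamma>0$ (the number of relevant subcube pairs being a constant fixed by the complexity parameters of $\mathbb S$), so again the tail goes to $0$.

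The genuinely technical point, and essentially the only place real work is required, is the tail estimate $\|[b_0,T]-T_M\|\to 0$: one must extract honest geometric decay in the scale of $I$, uniformly over the unit ball, from the cancellation $b_0(x)-b_0(y)$ together with the $L^{p_j}$-to-Haar-coefficient duality, while noting that only $O(1)$ cubes meet $\operatorname{supp} b_0$ at each large scale; the small-cube cancellation, the finite-rank truncation, and the functional-analytic wrap-up are all routine. A secondary point is that finite linear combinations of Haar functions are indeed dense in $\textit{CMO}$, which legitimizes the density reduction; if one prefers to approximate $b$ by $C_c^\infty$ functions the small-cube cancellation weakens to an $O(l(I))$ bound, still summable by the triangle inequality on $\mathbb R$ and by a square-function estimate on $\mathbb R^d$, leading to the same conclusion.
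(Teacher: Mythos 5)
Your proposal is correct in outline but follows a genuinely different route from the paper. The paper never builds finite-rank approximants: it verifies the three conditions of the Fr\'echet--Kolmogorov--Riesz--Tsuji criterion for the image of the unit ball, assuming $b\in C_c^\infty$ with compact support and estimating (b) decay at infinity by exploiting that only the $O(1)$ cubes $I=[0,2^\ell),[-2^\ell,0)$ meet $\supp b$, and (c) translation continuity by splitting $[b,T_\epsilon^{\vec\alpha}]_i(\vec f)(x+h)-[b,T_\epsilon^{\vec\alpha}]_i(\vec f)(x)$ into five terms and summing $|h|^{a}|I|^{1-a}$-type block bounds over scales via disjoint supports and H\"older; for (ii) it proves a general lemma (compactness of a multilinear $T$ plus $b\in\textit{CMO}$ implies compactness of $[b,T]_i$) and iterates, and for (iii) it first proves the $L^p$ bound $\|[b,\mathbb S]f\|_p\lesssim\|b\|_{\mathrm{BMO}}\|f\|_p$ via a sharp-maximal-function argument (your ``classical boundedness'' is exactly this Lemma, so you are relying on the same ingredient). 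Your scheme instead exhibits each commutator as an operator-norm limit of finite-rank operators, using the exact vanishing of the $I$-block when the symbol is constant on $I$ together with the observation that each surviving block has range in $\operatorname{span}\{h_I^{\sigma(\vec\alpha)},\,b_0h_I^{\sigma(\vec\alpha)}\}$; the large-cube tail estimate you sketch is right (each block at a huge cube $I$ meeting $\supp b_0$ has norm $\lesssim |I|^{-\gamma}$ with $\gamma=\min(1/p,1/p_i')>0$, and only $O(1)$ such cubes occur per scale), and it plays the role of the paper's condition (b) computation. What your approach buys is a cleaner functional-analytic conclusion (no FKRT needed, and compactness of the iterated commutator comes out of the same block structure); what the paper's approach buys is that it works directly from $C_c^\infty$ symbols with no discussion of Haar truncations, and its Lemma for (ii) is a reusable general principle.

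One point needs repair in your write-up: the density claim as stated, ``finite linear combinations of Haar functions are dense in $\textit{CMO}$,'' is false in the literal sense, because a Haar function is not in $\textit{CMO}$ (its mean oscillation over small intervals centered at a jump does not vanish, and membership in the BMO-closure of $C_c^\infty$ forces vanishing small-scale oscillation). What your argument actually needs, and what is true, is that every $b\in\textit{CMO}$ can be approximated in $\textit{BMO}$ (hence $\textit{BMO}^d$) norm by finite Haar combinations: approximate $b$ by $g\in C_c^\infty$, then note that the truncation of the Haar expansion of $g$ to scales $2^{-N}\le l(I)\le 2^{N}$ converges to $g$ uniformly (the small-scale tail is $O(\|g'\|_\infty 2^{-N})$ since only one cube per scale contains a given point, and the large-scale tail is $O(\|g\|_{1}2^{-N})$), so the remainder is small in $L^\infty$ and a fortiori in $\textit{BMO}$. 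With this justification, together with the commutator bounds in terms of $\|\cdot\|_{\textit{BMO}^d}$ from the cited boundedness results, your reduction and hence the whole scheme goes through; your fallback remark (approximating by $C_c^\infty$ and accepting $O(l(I))$ small-cube blocks) also works, but the summation over same-scale cubes there requires the disjoint-support plus H\"older argument, not merely the triangle inequality.
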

The rest of this article is organized as follows. Some preliminaries which will be used later are given in Section \ref{pre}. The proof of Theorem \ref{Continuity} will be given in Section $\ref{TH1}$. Section $\ref{non-c}$ will be devoted to demonstrate Theorem \ref{noncompaceness} and Theorem \ref{compactness-pi}. The proof of Theorem $\ref{compaceness-commutators}$ will be presented in Section \ref{TH5}. 

\vspace{0.5cm}
\section{Preliminaries}\label{pre}
\noindent
2.1\quad \textbf{Standard dyadic lattices and Haar system.}
The standard dyadic system in $\mathbb{R}^{d}$ is
\begin{align*}
\mathcal{D}:=\bigcup_{k\in\mathbb{Z}}\mathcal{D}_{k}, \quad \mathcal{D}_{k}
:=\{2^k([0,1)^{d}+m):m \in\mathbb{Z}^{d}\}.
\end{align*}
For $I\in \mathcal{D}$, $I^{(j)}$ is denoted to be the $j$-th dyadic ancestor of $I$ 
($2^{j}l(I)=l(I^{(j)})$ and $I\subset I^{(j)}$). Given a cube $I=x+[0,1)^{d}$, let $ch(I):=\{x+\eta l/2+[0,1/2)^{d}: \eta\in\{0,1\}^d\} $be the collection of 
dyadic children of $I$. 
Thus $\mathcal{D}_{k-1}=\bigcup\{ch(I): I\in\mathcal{D}_{k}\}$. Associated to 
the dyadic cube $I$ there is a Haar function $h_I$ which is defined by
\begin{align*}
h_I=\sum_{J\in \{ch(I)\}}\alpha_{J}1_{J},\quad  \sum_{J\in \{ch(I)\}}\alpha_{J}|J|=0.
\end{align*}
When $I$ is a dyadic interval and let $I_{+}$ and $I_{-}$ be the right and left halves of $I$, then , the Haar function $h_I$ is defined by
$
h_I=1_{I_{+}}-1_{I_{-}}.
$ It is well 
known that the collection of all Haar functions 
$\{\frac{h_I}{\sqrt{|I|}}: I\in\mathcal{D}\}$ is an orthonormal basis of 
$L^2({\mathbb{R}})$ and an unconditional basis of $L^p({\mathbb{R}})$ for 
$1<p<\infty$.\par

\vspace{0.4cm}
\noindent
2.2\quad \textbf{Multilinear weights.}\par
Following the notation in \cite{LOPTT}, for $m$ exponents $p_1, \cdots, p_m$, we write $p$ for the number given by $1/p=1/p_1+\cdots+1/p_m$ and $\vec{P}$ for the vector $\vec{P}=(p_1, \cdots, p_m)$. \begin{definition}[Multiple weights, \cite{LOPTT}] For $1\leq p_1, \cdots, p_m<\infty$ and a multiple weight $\vec{\omega}=(\omega_1, \cdots, \omega_m)$, we say that $\vec{\omega}$ satisfies the multilinear $A_{\vec{P}}$ condition if
\begin{align*}
\sup_I\left(\frac{1}{|I|}\int_I\nu_{\vec{\omega}}\right)^{1/p}\prod_{j=1}^m\left(\frac{1}{|I|}\int_I\omega_j^{1-p_j^{'}}\right)^{1/p_j}<\infty,
\end{align*}
where $\nu_{\vec{\omega}}=\prod_{j=1}^m\omega_j^{p/p_j}$. When $p_j=1$, $\left(\frac{1}{|I|}\int_I\omega_j^{1-p_j^{'}}\right)^{1/p_j}$ is understood as $\|\omega_j^{-1}\|_{L^{\infty}(I)}$.
\end{definition}
By H\"{o}lder's inequality, it is easy to see that
\begin{align*}
\prod_{j=1}^mA_{\vec{P_j}}\subset A_{\vec{P}}.
\end{align*}
Moreover, if $\omega\in A_{\vec{P}}$, then we have $\nu_{\vec{\omega}}\in A_{mp}$. We will similarly denote the dyadic multilinear $A_{\vec{P}}$ class by $A_{\vec{P}}^d$.


\vspace{0.4cm}
\noindent
2.3\quad \textbf{BMO space.}
For a locally integrable function $b$ on $\mathbb{R}$, set
\begin{align*}
\big\|b\big\|_{\textit{BMO}}
=\sup_{I}\frac{1}{|I|}\int_{I}|b(x)-\langle b\rangle_I|dx,
\end{align*}
where the supremum is taken over all intervals $I$ in $\mathbb{R}$. 
The function $b$ is called of bounded mean oscillation if 
$\|b\|_{\textit{BMO}}<\infty$ and $\textit{BMO}(\mathbb{R})$ is the set of all 
locally integrable functions $b$ on $\mathbb{R}$ with 
$\|b\|_{\textit{BMO}}<\infty$. We define $\textit{CMO}$ to be the closure of 
${C}_{c}^{\infty}$ in the $\textit{BMO}$ norm.\par
\vspace{0.3cm}
\noindent If we take the supremum over all dyadic intervals in $\mathbb{R}$, 
we get a larger space of dyadic $\textit{BMO}$ functions which is denoted by 
$\textit{BMO}^d$.
\noindent For $1<r<\infty$, define
\begin{align*}
\textit{BMO}_r=\{b\in L_{loc}^{p}: \big\|b\big\|_{\textit{BMO}_r}<\infty\},
\end{align*}
where $\|b\|_{\textit{BMO}_r}:=\big(
\sup_{I}\frac{1}{|I|}\int_{I}|b(x)-\langle b\rangle_I|^rdx\big)^{\frac{1}{r}}$.
\noindent For any $1<r<\infty$, the norms $\|b\|_{\textit{BMO}_r}$ and 
$\|b\|_{\textit{BMO}}$ are equivalent (see \cite{H}, \cite{JN}). For $r=2$, it
 follows frow the orthogonality of Haar system that 
\begin{align*}
\big\|b\big\|_{\textit{BMO}_2^d}
=\bigg(\sup_{I}\frac{1}{|I|}\sum_{J\subseteq I}
\frac{\langle b,h_J\rangle^2}{|J|^2}\bigg)^{1/2}.
\end{align*}
On $\mathbb R^d$, we may define $\textit{BMO}(\mathbb R^d)$ and its dyadic version 
in a similar way. 

\vspace{0.4cm}
\noindent
2.4\quad \textbf{A key lemma.}
The following lemma is quite useful and it provides a foundation for our analysis in the proof. \par
\begin{lemma}[Fr\'{e}chet-Kolmogorov-Riesz-Tsuji theorem,
 \cite{Tsuji, Y})].
Let $0< r<\infty$. A closed subset $\mathcal{K}\subseteq L^{r}$ is compact
if and only if the following three conditions are satisfied:\par
(a) $\mathcal{K}$ is boundedness in $L^{r}$;\par
(b) $\lim_{A\to\infty}\int_{|x|>A}|f(x)|^rdx=0$ uniformly for
$f\in\mathcal{K}$;\par
(c) $\lim_{t\to 0}\big\|f(x+t)-f(x)\big\|_{L^r}=0$ uniformly for
$f\in\mathcal{K}$.\end{lemma}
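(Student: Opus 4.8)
The plan is to recognize this lemma as the classical Kolmogorov--Riesz compactness criterion for $L^{r}$, valid for every $0<r<\infty$ once $L^{r}$ is viewed as a complete metric space --- with metric $\|f-g\|_{L^{r}}$ when $r\ge1$ and $\|f-g\|_{L^{r}}^{r}$ when $0<r<1$. Since $L^{r}$ is complete and $\mathcal{K}$ is closed, compactness of $\mathcal{K}$ is the same as total boundedness, so it suffices to show that total boundedness of $\mathcal{K}$ is equivalent to the conjunction of (a), (b) and (c). I would prove necessity first (the easy direction) and then sufficiency.

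\emph{Necessity} is routine. Total boundedness at once gives (a). For (b) and (c), fix $\eps>0$ and a finite $\eps$-net $f_{1},\dots,f_{N}\in\mathcal{K}$; each $f_{i}\in L^{r}$, so $\int_{|x|>A}|f_{i}|^{r}\,dx\to0$ as $A\to\infty$ by dominated convergence, and $\|\tau_{t}f_{i}-f_{i}\|_{L^{r}}\to0$ as $t\to0$, where $\tau_{t}g(x):=g(x+t)$; the second fact is the continuity of translation on $L^{r}$, obtained by approximating $f_{i}$ in $L^{r}$ by a function in $C_{c}$ and using its uniform continuity. Choosing $A$ large (resp.\ $|t|$ small) enough to serve all of $f_{1},\dots,f_{N}$, the uniform statements (b), (c) for an arbitrary $f\in\mathcal{K}$ follow from the (quasi-)triangle inequality together with the translation invariance of Lebesgue measure, which gives $\|\tau_{t}f-\tau_{t}f_{i}\|_{L^{r}}=\|f-f_{i}\|_{L^{r}}$.

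For \emph{sufficiency} --- the substantive direction --- I would assume (a), (b), (c), fix $\eps>0$, and build a finite $\eps$-net in three steps. \textbf{Step 1 (tail cut-off).} By (b) pick $R$ with $\|f\mathbf 1_{\{|x|>R\}}\|_{L^{r}}<\eps$ for all $f\in\mathcal{K}$; then it is enough to cover $\mathcal{K}_{R}:=\{f\mathbf 1_{\{|x|\le R\}}:f\in\mathcal{K}\}$ by finitely many $\eps$-balls. \textbf{Step 2 (local averaging).} For a scale $\delta>0$, let $E_{\delta}$ be the conditional expectation onto the \emph{finite} family of dyadic cubes of side $\delta$ meeting $\{|x|\le R\}$, so $E_{\delta}g=\sum_{Q}\langle g\rangle_{Q}\mathbf 1_{Q}$; using the identity $\langle g\rangle_{Q}-g(x)=\tfrac1{|Q|}\int_{Q}(g(x+t)-g(x))\,dt$ for $x\in Q$ and Minkowski's integral inequality (for $r\ge1$) one obtains
\[
\|E_{\delta}g-g\|_{L^{r}}\ \le\ C_{d}\,\sup_{|t|\le\sqrt d\,\delta}\|\tau_{t}g-g\|_{L^{r}},
\]
so by (c) there is a $\delta$ with $\sup_{g\in\mathcal{K}_{R}}\|E_{\delta}g-g\|_{L^{r}}<\eps$. \textbf{Step 3 (finite dimension).} The family $\{E_{\delta}g:g\in\mathcal{K}_{R}\}$ lies in the finite-dimensional span of the indicators $\mathbf 1_{Q}$ of those cubes and is bounded there by (a) and the $L^{r}$-boundedness of $E_{\delta}$; a bounded subset of a finite-dimensional (quasi-)normed space is totally bounded. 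Chaining Steps 1--3, every $f\in\mathcal{K}$ lies within $C\eps$ of the totally bounded set $E_{\delta}(\mathcal{K}_{R})$, so $\mathcal{K}$ is totally bounded, hence compact.

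The hard part will be Step 2, the uniform approximation of $\mathcal{K}_{R}$ by a totally bounded family. For $r\ge1$ this is the short Minkowski computation above; the genuine care is needed when $0<r<1$, where Minkowski fails and one must run the whole argument in the metric $d(f,g)=\int|f-g|^{r}$ using the $r$-subadditivity $\|u+v\|_{L^{r}}^{r}\le\|u\|_{L^{r}}^{r}+\|v\|_{L^{r}}^{r}$, checking in addition that the averaging operators remain uniformly bounded in the $L^{r}$ quasi-norm; alternatively one replaces $E_{\delta}$ by a mollification $g\mapsto g*\varphi_{\delta}$ and deduces Step 3 from the Arzel\`a--Ascoli theorem. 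It is also worth recording that hypothesis (b) is not redundant: (a) and (c) alone hold for the integer translates $\{\tau_{k}\varphi:k\in\integers\}$ of a fixed bump $\varphi\in C_{c}$, which form a non-precompact family.
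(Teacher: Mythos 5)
You should first note that the paper itself offers no proof of this lemma: it is quoted verbatim from Tsuji and Yosida, so there is no internal argument to compare with, and what you have written is the classical Kolmogorov--Riesz proof. For $r\ge 1$ your outline is essentially correct (necessity via a finite $\varepsilon$-net plus continuity of translation, sufficiency via tail cut-off, averaging over a finite grid, and finite-dimensionality), with one small repair needed: in Step 2 you invoke (c) for the truncated functions $g=f\mathbf 1_{\{|x|\le R\}}\in\mathcal K_R$, but (c) is a hypothesis on $f\in\mathcal K$, and $\|\tau_t g-g\|_{L^r}$ contains the extra term $\bigl(\int_{\{|x|\le R\}\,\triangle\,\{|x+t|\le R\}}|f|^r\,dx\bigr)^{1/r}$, whose uniform smallness over $\mathcal K$ is not one of the hypotheses. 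The standard fix is to average $f$ itself over the (finitely many) cubes of side $\delta$ meeting $\{|x|\le R\}$, bound $f-E_\delta f$ on the union of those cubes by the translation modulus of $f$, and control the complement of that union by (b); this is a one-line change.

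The genuine gap is the range $0<r<1$, which the paper actually needs since its target exponent $p=(1/p_1+\cdots+1/p_m)^{-1}$ may be less than $1$. There the devices you propose simply do not exist: an $L^r$ function with $r<1$ need not be locally integrable (e.g.\ $x^{-1}\mathbf 1_{(0,1)}\in L^{1/2}$), so neither the conditional expectation $E_\delta g$ nor the mollification $g*\varphi_\delta$ is even defined, Minkowski's integral inequality is unavailable, and Jensen's inequality goes in the wrong direction, so ``checking that the averaging operators remain uniformly bounded in the $L^r$ quasi-norm'' cannot be carried out, and the Arzel\`a--Ascoli variant fails for the same reason. A route that does work (essentially Tsuji's) is to transfer to $L^1$ via the map $\Phi(f)=|f|^r\operatorname{sgn} f$: the elementary inequality $\bigl||a|^r\operatorname{sgn}a-|b|^r\operatorname{sgn}b\bigr|\le 2|a-b|^r$ shows that $\Phi(\mathcal K)$ inherits (a), (b), (c) in $L^1$, so by the already-proved case every sequence in $\mathcal K$ has a subsequence with $\Phi(f_n)$ convergent in $L^1$, hence almost everywhere, hence $f_n$ convergent almost everywhere; one then upgrades to convergence in $L^r$ by Vitali's theorem, the needed uniform integrability on small sets and at infinity being extracted from (b) together with the averaging estimate applied to the functions $\Phi(f)$. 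Your closing remark that (b) is not redundant (integer translates of a fixed bump satisfy (a) and (c) but are not precompact) is correct.
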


\vspace{0.5cm}

\section{Proof of Theorem \ref{Continuity}}\label{TH1}
Now, we begin to prove Theorem \ref{Continuity}.
\proof
\textbf{(i)}  Our first aim is to demonstrates the continuity of $\mathbb S(f)$.
Let $|\nabla f|$ be bounded in $\mathbb{R}^{d}$. 
For $\varepsilon>0$, there exists $k_0>0$ such that 
$\sum_{k=k_0}^{\infty}\frac{1}{2^k}<\varepsilon$. Then , it holds that\begin{align*}
\lim_{t\rightarrow0}|\mathbb Sf(x+t)-\mathbb Sf(x)|
\leq I_1+{I_2}
\end{align*}
where
\begin{align*}
{I_1}
=\lim_{t\rightarrow0}\Big|\sum_{l(I)\leq 2^{-k_0}}\frac{1}{|I|}
\int_{I}a_I(x+t,y)f(y)dy-\sum_{l(I)
\leq 2^{-k_0}}\frac{1}{|I|}\int_{I}a_I(x,y)f(y)dy\Big|.
\end{align*}

\begin{align*}
{I_2}=\lim_{t\rightarrow0}\Big|\sum_{l(I)> 2^{-k_0}}\frac{1}{|I|}
\int_{I}a_I(x+t,y)f(y)dy-\sum_{l(I)> 2^{-k_0}}\frac{1}{|I|}
\int_{I}a_I(x,y)f(y)dy\Big|.
\end{align*}
Therefore, we need to consider the contributions of  ${I_1}$ and ${I_2}$, respectively.\par
\vspace{0.4cm}
\noindent (1) Estimates for ${I_1}$.
For any $x\in I$, there is only one cube $I^{''}$ such that $x\in I^{''}$. 
Hence, noting that $\|h_{I^{'}}\|_{\infty}\|h_{I^{''}}\|_{\infty}\leq1$, 
it yields that
\begin{align*}
\bigg|\frac{1}{|I|}\int_{I}a_{I}(x,y)f(y)dy\bigg|
&=\bigg|\frac{1}{|I|}\sum_{\substack{I^{'},I^{''}\in\mathcal{D},I^{'},I^{''}
\subset I\\l(I^{'})=2^{-m}l(I),l(I^{''})=2^{-n}l(I)}}
\langle f,h_{I^{'}}\rangle h_{I^{''}}\bigg|
\\ 
&=\bigg|\frac{1}{|I|}\sum_{\substack{I^{'}\subset I\\l(I^{'})=2^{-m}l(I)}}
\sum_{J\in \{ch(I^{'})\}}\alpha_{J}\int_{J}f(y)dyh_{I^{''}}\bigg|
\\ 
&\leq\sum_{\substack{I^{'}\subset I\\l(I^{'})=2^{-m}l(I)}}\frac{1}{|I|}
\bigg|\sum_{J\in \{ch(I^{'})\}}\frac{\alpha_{J}}{\|h_{I'}\|_{\infty}}
\int_{J}f(y)dy\bigg|
\end{align*}

Let $x_0\in I^{'}$ be a fixed point.  It is easy to see that
$\sum_{J\in \{ch(I^{'})\}}\alpha_{J}f(x_0)|J|=0$. Then,  the mean value 
theorem gives that
\begin{align*}
\biggl|\sum_{J\in \{ch(I^{'})\}}\frac{\alpha_{J}}{\|h_{I'}\|_{\infty}}
\int_{J}f(y)dy\biggr|
&=\biggl|\sum_{J\in \{ch(I^{'})\}}\frac{\alpha_{J}}{\|h_{I'}\|_{\infty}}
\int_{J}(f(y)-f(x_0))dy\biggr|
\\
&\le
\sum_{J\in \{ch(I^{'})\}}\sqrt d\, l(I')\|\nabla f\|_{\infty}|J|
\\
&\le 2^d \sqrt d\,2^{-m(d+1)}l(I)|I|.
\end{align*}
Consequently, this leads to 
\begin{align*}
\bigg|\frac{1}{|I|}\int_{I}a_{I}(x,y)f(y)dy\bigg|
&\leq\sum_{\substack{I^{'}\subset I\\l(I^{'})=2^{-m}l(I)}}\frac{1}{|I|}
2^d \sqrt d\,2^{-m(d+1)}l(I)|I|
&\leq 2^{-m}\sqrt{d}\,\|\nabla f\|_{\infty}l(I).
\end{align*}
Therefore, it holds that
\begin{align*}
{I_1}&\leq\bigg|\sum_{l(I)\leq 2^{-k_0}}\frac{1}{|I|}
\int_{I}a_{I}(x+t,y)f(y)dy\bigg|+\bigg|\sum_{l(I)\leq 2^{-k_0}}\frac{1}{|I|}
\int_{I}a_{I}(x,y)f(y)dy\bigg|
\\
&\leq 2^{-m+1}\sqrt{d}\,\|\nabla f\|_{\infty}\sum_{l(I)\leq 2^{-k_0}}l(I)
\\
&\lesssim\varepsilon.
\end{align*}
\noindent (2) Estimates for $I_2$.
Let $\widetilde{\mathcal{D}}$ consist of all the boundary points of the dyadic 
cubes $I\in\mathcal{D}$. 
Let $x\in\mathbb{R}^{d}\setminus\widetilde{\mathcal{D}}$. Then there exists 
$I_{k_0}\in\mathcal{D}_{-k_0-m}$ such that $x\in I_{k_0}$. 
If $I\in \cup_{k=-k_0+1}^{\infty}\mathcal D_k$ contains $x$, then it follows 
that 
$x\in I_{k_0}\subset I$ and $I$ is an $\ell$-th ancestor of $I_{k_0}$ for 
$\ell\ge m$. Hence $I_{k_0}$ is contained in one of $\mathit{ch}(I)$, which 
implies that $h_{I^{''}}(x+t)=h_{I^{''}}(x)$  for all 
 $I\in\bigcup_{k=-k_0+1}^{\infty}\mathcal{D}_{k}$. Thus, it follows that
\begin{align*}
{I_2}&=\lim_{t\rightarrow0}\Big|\sum_{l(I)> 2^{-k_0}}\frac{1}{|I|}\int_{I}\big(a_I(x+t,y)-a_I(x,y)\big)f(y)dy\Big|\\
&\leq\lim_{t\rightarrow0}\sum_{l(I)> 2^{-k_0}}\frac{1}{|I|}\Big|\sum_{\substack{I^{'},I^{''}\in\mathcal{D},I^{'},I^{''}\subset I\\l(I^{'})=2^{-m}l(I),l(I^{''})=2^{-n}l(I)}}\langle f,h_{I^{'}}\rangle \big(h_{I^{''}}(x+t)-h_{I^{''}}(x)\big)\Big|\\
&=\lim_{t\rightarrow0}\sum_{l(I)> 2^{-k_0}}\frac{1}{|I|}\bigg|\sum_{\substack{I^{'}\subset I\\l(I^{'})=2^{-m}l(I)}}\langle f,h_{I^{'}}\rangle\big(h_{I^{''}}(x+t)-h_{I^{''}}(x)\big)\bigg|\\
&=0.
\end{align*}
\noindent Therefore, $\mathbb Sf(x)$ is continuous almost everywhere.

\vspace{0.4cm}
\noindent \textbf{(ii)} 
Now, we consider the continuity of  $\pi_{b}^{\vec{\alpha}}(\vec{f})$. The proof of 
continuity for $T_{\epsilon}^{\vec{\alpha}}(\vec{f})$ follows similarly.
Let $\vec{\alpha}\in \{0,1\}^m\setminus\{(1,\cdots,1)\}$. Suppose that 
$f_j^{'}$ is bounded when $\alpha_j=0$ and $f_j$ is bounded when $\alpha_j=1$ 
in $\mathbb{R}$.
For $\varepsilon>0$, there exists $k_0>0$ such that 
$\sum_{k=k_0}^{\infty}\frac{1}{2^k}<\varepsilon$. Then, it holds that
\begin{align*}
\lim_{t\rightarrow0}|\pi_{b}^{\vec{\alpha}}(\vec{f})(x+t)
-\pi_{b}^{\vec{\alpha}}(\vec{f})(x)|\leq {II_1}+{II_2}
\end{align*}
where
\begin{align*}
{II_1}=\lim_{t\rightarrow0}\Big|&\sum_{l(I)\leq 2^{-k_0}}
\frac{\langle b,h_I\rangle}{|I|}\prod_{j=1}^{m}
\frac{\langle f_j, h_{I}^{1+\alpha_{j}}\rangle}{|I|} 
h_{I}^{1+\sigma(\vec{\alpha})}(x+t)
\\
&-\sum_{l(I)\leq 2^{-k_0}}\frac{\langle b,h_I\rangle}{|I|}
\prod_{j=1}^{m}\frac{\langle f_j, h_{I}^{1+\alpha_{j}}\rangle}{|I|} 
h_{I}^{1+\sigma(\vec{\alpha})}(x)\Big|
\end{align*}
and
\begin{align*}
{II_2}=\lim_{t\rightarrow0}\Big|&\sum_{l(I)>2^{-k_0}}
\frac{\langle b,h_I\rangle}{|I|}\prod_{j=1}^{m}
\frac{\langle f_j, h_{I}^{1+\alpha_{j}}\rangle}{|I|} 
h_{I}^{1+\sigma(\vec{\alpha})}(x+t)
\\
&-\sum_{l(I)> 2^{-k_0}}\frac{\langle b,h_I\rangle}{|I|}
\prod_{j=1}^{m}\frac{\langle f_j, h_{I}^{1+\alpha_{j}}\rangle}{|I|} 
h_{I}^{1+\sigma(\vec{\alpha})}(x)\Big|.
\end{align*}

Next, we will estimate ${II_1}$ and ${II_2}$, respectively.\par
\vspace{0.3cm}
\noindent (1) Estimates for ${II_1}$. 
For any $\alpha_j=0$, the mean value theorem yields that
\begin{align*}
\frac{|\langle f_j,h_I\rangle|}{|I|}
=\frac{|\int_{I_{+}}(f_j(x)-f_j(x_I))dx-\int_{I_{-}}(f_j(x)-f(x_i))dx|}{|I|}
\leq {|I|}\sup_{x\in\mathbb{R}} |f_j^{'}(x)|,
\end{align*}
where $x_I$ is the center of the interval $I$. 
By the definition of $\textit{BMO}^d$, we know that
$\frac{\langle b,h_I\rangle}{|I|}$ is bounded. The boundedness of 
$\frac{\langle f_j,h_I^{2}\rangle}{|I|}
=\frac{\langle f_j,\chi_{I}\rangle}{|I|}$ follows from the boundedness of 
$f_j$ in $\mathbb{R}$. These basic facts yield that
\begin{align*}
{II_1}
&\leq\Big|\sum_{l(I)\leq 2^{-k_0}}\frac{\langle b,h_I\rangle}{|I|}
\prod_{j=1}^{m}\frac{\langle f_j, h_{I}^{1+\alpha_{j}}\rangle}{|I|} 
h_{I}^{1+\sigma(\vec{\alpha})}(x+t)\Big|
\\
&\quad+\Big|\sum_{l(I)\leq 2^{-k_0}}\frac{\langle b,h_I\rangle}{|I|}
\prod_{j=1}^{m}\frac{\langle f_j, h_{I}^{1+\alpha_{j}}\rangle}{|I|} 
h_{I}^{1+\sigma(\vec{\alpha})}(x)\Big|
\\
&\leq2\sum_{l(I)\leq 2^{-k_0}}\Big|\frac{\langle b,h_I\rangle}{|I|}
\prod_{\alpha_j=0}\frac{\langle f_j, h_{I}\rangle}{|I|}
\prod_{\alpha_j=1}\frac{\langle f_j, \chi_{I}\rangle}{|I|}\Big|
\\
&\lesssim2\big\|b\big\|_{\textit{BMO}}
\sum_{k=k_0}^{\infty}\Bigl(\frac{1}{2^{k}}\Bigr)^{\sigma(\vec{\alpha})}
\\
&\lesssim\varepsilon
\end{align*}

\vspace{0.3cm}
\noindent (2) Estimates for ${II_2}$.
Let $\widetilde{\mathcal{D}}$ consist of all end-points of the dyadic intervals
 $I\in\mathcal{D}$.
Let $x\in\widetilde{\mathcal{D}}{ }^{c}$. Then there exists 
$I_{k_0}\in\mathcal{D}_{-k_0}$ such that $x\in I_{k_0}$. If 
$I\in \cup_{k=-k_0+1}^{\infty}\mathcal D_k$ contains $x$, then $I_{k_0}$ is 
contained in either $I_+$ or $I_-$, which implies
$h_{I}(x+t)=h_{I}(x)$ for $|t|<\dist(x,{I_{k_0}}^c)$. Therefore, for 
$I\in\bigcup_{k=-k_0+1}^{\infty}\mathcal{D}_{k}$ and $x\in I$ we get 
$h_{I}(x+t)=h_{I}(x)$. Then 
$h_{I}^{1+\sigma(\vec{\alpha})}(x+t)-h_{I}^{1+\sigma(\vec{\alpha})}(x)=0$. 
Consequently, it holds that
\begin{equation*}
{II_2}=\lim_{t\rightarrow0}\Big|\sum_{l(I)> 2^{-k_0}}
\frac{\langle b,h_I\rangle}{|I|}
\prod_{j=1}^{m}\frac{\langle f_j, h_{I}^{1+\alpha_{j}}\rangle}{|I|}
\bigg(h_{I}^{1+\sigma(\vec{\alpha})}(x+t)-h_{I}^{1+\sigma(\vec{\alpha})}(x)
\bigg)\Big| = 0. 
\end{equation*}
Finally, for $\vec{\alpha}\in \{0,1\}^m\setminus\{(1,\cdots,1)\}$, we have showed that
$\pi_{b}^{\vec{\alpha}}(\vec{f})(x)$ is continuous almost everywhere. 
When $\vec{\alpha}=(1,1,\cdots,1)$, let $b^{'}(x)$ be bounded, proceeding similar arguments as before, one may obtain that $\pi_{b}^{\vec{\alpha}}(\vec{f})(x)$ 
is almost everywhere continuous for all bounded 
$f_j$ in $\mathbb{R}$.
\qed
\par\vspace{0.5cm}
\section {Proofs of Theorems \ref{noncompaceness} and \ref{compactness-pi}}
 \label{non-c}

\begin{proof}[Proof of Theorem \ref{noncompaceness}]

Let $T$ be any of these dyadic operators and $\mathcal{K}=\{T(\vec{f})(x):\|f_j\|_{L^{p_j}}\leq1, j=1,\cdots,m\}$. According to the definition of compact operator, we need to show that $\mathcal{K}$ is precompact ($\overline{\mathcal{K}}$ is compact). It is obviously that
$T_{\{\epsilon_I=1\}}^{\{0\}}$ is the identity operator on $L^p(\mathbb R)$ by the reason that $\sum_{I\in\mathcal{D}}\langle f,h_I\rangle h_I=f(x)$ for 
$f\in L^p(\mathbb R)$ $(1<p<\infty)$. 
Moreover,
$T_{\{\epsilon_I=1\}}^{\{0\}}$ is not a compact operator since the unit ball of $L^p(\mathbb R)$ is not a compact set. 
Counter-examples will be given to illustrate that $\mathcal{K}$ doesn't satisfy 
the condition (c) for any Haar multipliers and dyadic shift, which implies the 
noncompactness of these dyadic operators. 
(i) By the Fr\'{e}chet-Kolmogorov-Riesz-Tsuji theorem, we need to show that
\begin{align*}
\mathcal{K}=\{T_{\epsilon}^{\vec{\alpha}}(\vec{f})(x): \|f_j\|_{p_j}\leq1\}
\end{align*}
at least does't meet one of the three conditions. 

We first observe the following:
For $I\in \mathcal D$, we define $\vec f_I=(f_{I,1},\dots, f_{I.m})$ by 
$f_{I,j}=|I|^{-1/p_j}h_I^{1+\alpha_j}$. Then we have
\begin{align}
&\|f_{I,j}\|_{p_j}=1,\label{eq:T-noncpt-1}
\\
&\langle f_{I,j}, h_I^{1+\alpha_j}\rangle=|I|^{1-1/p_j},\notag
\\
&\langle f_{I,j}, h_J \rangle=0,\ \text{ for }I\ne J\in\mathcal D, 
\alpha_j=0.\notag
\end{align}
Hence, noting $\alpha_j=0$ for at least one $1\le j\le m$, we get
\begin{equation*}
T_{\epsilon}^{\vec \alpha}(\vec f_I)
=\epsilon_I|I|^{-1/p}h_I^{\sigma(\vec\alpha)},
\end{equation*}
and so
\begin{equation}\label{eq:T-noncpt-2}
\|T_{\epsilon}^{\vec \alpha}(\vec f_I)\|_p=|\epsilon_I|.
\end{equation}
For $|I|<t$, we have $(I+t)\cap I=\emptyset$, and hence
\begin{equation}\label{eq:T-noncpt-3}
\|T_{\epsilon}^{\vec \alpha}(\vec f_I)(x+t)
-T_{\epsilon}^{\vec \alpha}(\vec f_I)(x)\|_p
=\|T_{\epsilon}^{\vec \alpha}(\vec f_I)\|_p=2|\epsilon_I|.
\end{equation}
Next, suppose that there exists $A>0$ such that 
$\#\{I\in\mathcal D: |\epsilon_I|\ge A\}=\infty$. We consider the following 
two cases: 
(1) $A_1:=\lim_{k\to\infty}\sup\limits_{I\in\mathcal D, 
I\subset [2^k,\infty)\cup (-\infty,-2^k)}|\epsilon_I|>0$, and 
(2) $\lim_{k\to\infty}\sup\limits_{I\in\mathcal D,
I\subset [2^k,\infty)\cup (-\infty,-2^k)}|\epsilon_I|=0$. \par
(1)  In this case, by \eqref{eq:T-noncpt-1} and \eqref{eq:T-noncpt-2} 
we see that 
\begin{equation*}
\limsup_{B\to\infty}\sup_{\|f_j\|_{p_j}\le1, 1\le j\le m}\biggl(
\int_{|x|\ge B}|T_{\epsilon}^{\vec \alpha}(\vec f_I)(x)|_pdx\biggr)^{1/p}
\ge A_1>0,
\end{equation*}
which shows the condition (b) does not hold.

(2) In this case, there exists $k_0\in\mathbb N$ such that  
$\#\{I\in \mathcal D, I\in [-2^{k_0},\,2^{k_0}]: |\epsilon_I|\ge A\}=\infty$, 
from which it follows that there exists $I_k\in\mathcal D$ such that 
$|\epsilon_{I_k}|\ge A, I_k\subset [-2^{k_0},\,2^{k_0}]$ 
and $\lim_{k\to\infty}|I_k|=0$. Hence, by \eqref{eq:T-noncpt-1} and 
\eqref{eq:T-noncpt-3}, it follows that
\begin{equation*}
\limsup_{t\to 0}\sup_{\|f_j\|_{p_j}\le1, 1\le j\le m}
\|T_{\epsilon}^{\vec \alpha}(\vec f)(x+t)
-T_{\epsilon}^{\vec \alpha}(\vec f)(x)\|_p\ge 2A>0.
\end{equation*}
This shows that condition (c) does not hold.

Hence, in any case, by Fr\'{e}chet-Kolmogorov-Riesz-Tsuji theorem, we know that
 $T_{\epsilon}^{\vec{\alpha}}(\vec{f})$ is not compact, under our assumption.

\vspace{0.3cm}
(ii) 
Suppose that $\mathbb S$ is a dyadic shift with  parameter $(m,n)$. 
Then, we can show that dyadic shift operator is not compact in the same way as
 in the case of $T_{\epsilon}^{\vec\alpha}$. We omit the proof of it. 
\end{proof}

\vspace{0.5cm} 
\begin{proof}[Proof of Theorem \ref{compactness-pi}]
 By the boundedness of $\pi_b^{\vec\alpha}$, it is trivial that 
$\pi_b^{\vec\alpha}$ satisfies the condition (a). Now we verify the condition 
(b) and the condition  (c) for its compactness.
We may assume $b\in C_c^\infty(\mathbb R)$ with $\supp b\subset (-1,1)$. For $k\ge1$,
The supports of $b$ and $h_I$ gives that
\begin{align*}
&\int_{|x|\ge2^k}|\pi_b^{\vec\alpha}(\vec f)(x)|^pdx
\\
&=\int_{|x|\ge2^k}\biggl|\sum_{I\in\mathcal D}\frac{\langle b, h_I\rangle}{|I|}
\prod_{j=1}^{m}\frac{\langle f_j, h_I^{1+\alpha_j}\rangle}{|I|}
h_I^{1+\sigma(\vec\alpha)}(x) \biggr|^p dx
\\
&=\int_{|x|\ge2^k}\biggl|\sum_{I=[0,2^\ell), [-2^\ell,0),\,\ell\ge k}
\frac{\langle b, h_I\rangle}{|I|} \prod_{j=1}^{m}\frac{\langle f_j, h_I^{1+\alpha_j}\rangle}{|I|}
h_I^{1+\sigma(\vec\alpha)}(x) \biggr|^p dx
\\
&\le \int_{|x|\ge2^k}\biggl(\sum_{I=[0,2^\ell), [-2^\ell,0),\,\ell\ge k}
\|b\|_{\infty} \frac{2}{|I|}
\prod_{j=1}^{m}\|f_j\|_{p_j}\frac{|I|^{1/p'_j}}{|I|}
\chi_{I}(x)\biggr)^pdx
\\
&\le C\Bigl(\|b\|_{\infty}\prod_{1}^m\|f_j\|_{p_j}\Bigr)^p
\int_{|x|\ge2^k}\biggl(\sum_{I=[0,2^\ell), [-2^\ell,0),\,\ell\ge k}
{|I|^{-1/p-1}}{\chi_{I}(x)}\biggr)^pdx
\\
&\le C\Bigl(\|b\|_{\infty}\prod_{1}^m\|f_j\|_{p_j}\Bigr)^p
\int_{|x|\ge2^k}\biggl(\sum_{\ell\ge k}
2^{-\ell (1/p+1)}{\chi_{[-2^\ell,2^\ell)}(x)}\biggr)^pdx
\\
&\le C\Bigl(\|b\|_{\infty}\prod_{1}^m\|f_j\|_{p_j}\Bigr)^p
\sum_{\ell=k}^{\infty}\int_{2^\ell}^{2^{\ell+1}}
\biggl(2^{-\ell (1/p+1)}\biggr)^p dx
\\
&= C\Bigl(\|b\|_{\infty}\prod_{1}^m\|f_j\|_{p_j}\Bigr)^p
\sum_{\ell=k}^{\infty}2^{-p\ell}
=C\Bigl(\|b\|_{\infty}\prod_{1}^m\|f_j\|_{p_j}\Bigr)^p2^{-pk}.
\end{align*}
Hence we have
\begin{equation*}
\lim_{A\to\infty}\int_{|x|\ge A}|\pi_b^{\vec\alpha}(\vec f)(x)|^p dx=0,
\end{equation*}
uniformly for $\vec f$ with $\|f_j\|_{p_j}\le 1$ $(1\le j\le m)$. Consequently, when $b\in \textit{CMO}$, $\pi_b^{\vec\alpha}$ satisfies the condition (b) for its compactness.

Let $1<p_1,\dots,p_m<\infty$ and $1/p=1/p_1+\dots+1/p_m$.
Now, we only need to consider dyadic intervals $I$
with $(-1,1)\cap I\ne\emptyset$ in the following summation. Therefore, it holds that
\begin{align*}
&\biggl(\int\biggl|\frac{\langle b,h_I \rangle}{|I|}
\prod_{j=1}^{m}\frac{\langle f_j, h_I^{1+\alpha_j}\rangle}{|I|}
(h_I^{1+\sigma(\vec\alpha)}(x+h)-h_I^{1+\sigma(\vec\alpha)}(x))\biggr|^pdx
\biggr)^{1/p}
\\
&\leq\biggl(\int\|b\|_{\infty}
\prod_{j=1}^{m}\frac{\|f_j\|_{p_j}|I|^{1/p'_j}}{|I|}
|(h_I^{1+\sigma(\vec\alpha)}(x+h)-h_I^{1+\sigma(\vec\alpha)}(x))|^pdx
\biggr)^{1/p}
\\
&\le C\|b\|_{\infty}\prod_{j=1}^{m}\|f_j\|_{p_j}
\frac{|h|^{1/p}}{|I|^{1/p}}.
\end{align*}
Thus we get
\begin{align*}
&\biggl(\int\Bigl|\sum_{|I|\ge1}\frac{\langle b,h_I \rangle}{|I|}
\prod_{j=1}^{m}\frac{\langle f_j, h_I^{1+\alpha_j}\rangle}{|I|}
(h_I^{1+\sigma(\vec\alpha)}(x+h)-h_I^{1+\sigma(\vec\alpha)}(x))\Bigr|^pdx
\biggr)^{1/p}
\\
&\le \sum_{|I|\ge1}\biggl(\int\Bigl|\frac{\langle b,h_I \rangle}{|I|}
\prod_{j=1}^{m}\frac{\langle f_j, h_I^{1+\alpha_j}\rangle}{|I|}
(h_I^{1+\sigma(\vec\alpha)}(x+h)-h_I^{1+\sigma(\vec\alpha)}(x))\Bigr|^pdx
\biggr)^{1/p}
\\
&\le
C\|b\|_{\infty}\prod_{j=1}^{m}\|f_j\|_{p_j}
\sum_{I=[0,2^\ell), [-2^\ell,0), \ell\in\mathbb N}
\frac{|h|^{1/p}}{|I|^{1/p}}
\le C\|b\|_{\infty}\prod_{j=1}^{m}\|f_j\|_{p_j}{|h|^{1/p}}.
\end{align*}

Next, for $|h|\le|I|$
, noting that $1/p=1/p_1+\dots+1/p_m$ and $\int h_I\,dx=0$, we have \begin{align*}
&\biggl(\int\Bigl|\frac{\langle b,h_I \rangle}{|I|}
\prod_{j=1}^{m}\frac{\langle f_j, h_I^{1+\alpha_j}\rangle}{|I|}
(h_I^{1+\sigma(\vec\alpha)}(x+h)-h_I^{1+\sigma(\vec\alpha)}(x))\Bigr|^pdx
\biggr)^{1/p}
\\
&=\biggl(\int\Bigl|\frac{\langle b-b(x_I),h_I \rangle}{|I|}
\prod_{j=1}^{m}\frac{\langle f_j, h_I^{1+\alpha_j}\rangle}{|I|}
(h_I^{1+\sigma(\vec\alpha)}(x+h)-h_I^{1+\sigma(\vec\alpha)}(x))\Bigr|^pdx
\biggr)^{1/p}
\\
&\le
\|b'\|_{\infty}|I| \prod_{j=1}^{m}\frac{|\langle f_j,h_I^{1+\alpha_j}\rangle|}{|I|}
\biggl(\int|h_I^{1+\sigma(\vec\alpha)}(x+h)-h_I^{1+\sigma(\vec\alpha)}(x)|^pdx
\biggr)^{1/p}
\\
&\le C\|b'\|_{\infty}
\prod_{j=1}^{m}\|f_j\chi_I\|_{p_j}{|h|^{1/p}}{|I|^{1-1/p}},
\end{align*}
where $x_I$ is the center of the dyadic interval $I$.

Similarly, in the case $|I|\le|h|$, it holds that$\bigl(\int|(h_I^{1+\sigma(\vec\alpha)}(x+h)
-h_I^{1+\sigma(\vec\alpha)}(x))|^pdx\bigr)^{1/p}\le C|I|^{1/p}$. Then, we may also obtain
\begin{align*}
&\biggl(\int\Bigl|\frac{\langle b,h_I \rangle}{|I|}
\prod_{j=1}^{m}\frac{\langle f_j, h_I^{1+\alpha_j}\rangle}{|I|}
(h_I^{1+\sigma(\vec\alpha)}(x+h)-h_I^{1+\sigma(\vec\alpha)}(x))\Bigr|^pdx
\biggr)^{1/p}
\\
&\le C\|b'\|_{\infty}
\prod_{j=1}^{m}\|f_j\chi_I\|_{p_j}{|I|},
\end{align*}
So, for any $0<a<1$, we have
\begin{align*}
&\biggl(\int\Bigl|\frac{\langle b,h_I \rangle}{|I|}
\prod_{j=1}^{m}\frac{\langle f_j, h_I^{1+\alpha_j}\rangle}{|I|}
(h_I^{1+\sigma(\vec\alpha)}(x+h)-h_I^{1+\sigma(\vec\alpha)}(x))\Bigr|^pdx
\biggr)^{1/p}
\\
&\le C\|b'\|_{\infty}
\prod_{j=1}^{m}\|f_j\chi_I\|_{p_j}{|h|^a}{|I|^{1-a}}.
\end{align*}
Thus, when $p>1$, for every $\ell\in\mathbb N$, we get 
\begin{align*}
&\sum_{|I|=2^{-\ell}}
\biggl(\int\Bigl|\frac{\langle b,h_I \rangle}{|I|}
\prod_{j=1}^{m}\frac{\langle f_j, h_I^{1+\alpha_j}\rangle}{|I|}
(h_I^{1+\sigma(\vec\alpha)}(x+h)-h_I^{1+\sigma(\vec\alpha)}(x))\Bigr|^pdx
\biggr)^{1/p}
\\
&\le
C\|b'\|_{\infty}{|h|^{1/p}}\sum_{|I|=2^\ell}
{|I|^{1-1/p}}\prod_{j=1}^{m}\|f_j\chi_I\|_{p_j}
\\
&\le
C\|b'\|_{\infty}{|h|^{1/p}}{2^{-(1-1/p)\ell}}
\prod_{j=1}^{m}\biggl(\sum_{|I|=2^{-\ell}}
\|f_j\chi_I\|_{p_j}^{p_j}\biggr)^{1/p_j}
\\
&\le C\|b'\|_{\infty}{|h|^{1/p}}{2^{-(1-1/p)\ell}}
\prod_{j=1}^{m}\|f_j\|_{p_j}.
\end{align*}
This leads to the following estimate:
\begin{align*}
&\biggl(\int\biggl|\sum_{|I|<1,\,I\cap(-1,1)\ne\emptyset}
\frac{\langle b,h_I \rangle}{|I|}
\prod_{j=1}^{m}\frac{\langle f_j, h_I^{1+\alpha_j}\rangle}{|I|}
(h_I^{1+\sigma(\vec\alpha)}(x+h)-h_I^{1+\sigma(\vec\alpha)}(x))
\biggr|^pdx\biggr)^{1/p}
\\
&\le
C\|b'\|_{\infty}\prod_{j=1}^{m}\|f_j\|_{p_j}{|h|^{1/p}}.
\end{align*}
When $p\le 1$ and $|h|<|I|$, it is easy to see that 
$|h|^{1/p}|I|^{1-1/p}<|h|^a|I|^{1-a}$ for some $0<a<1$.
Therefore, when $|h|<|I|<1$, for some $0<a<1$, we have
\begin{align*}
&\biggl(\int\Bigl|\frac{\langle b,h_I \rangle}{|I|}
\prod_{j=1}^{m}\frac{\langle f_j, h_I^{1+\alpha_j}\rangle}{|I|}
(h_I^{1+\sigma(\vec\alpha)}(x+h)-h_I^{1+\sigma(\vec\alpha)}(x))\Bigr|^pdx
\biggr)^{1/p}
\\
&\le C\|b'\|_{\infty}
\prod_{j=1}^{m}\|f_j\chi_I\|_{p_j}{|h|^a}{|I|^{1-a}}.
\end{align*}

Consequently, when $p\le 1$, by modifying a little bit, for some $0<a<1$, we get 
\begin{align*}
&\biggl(\int\biggl|\sum_{|I|<1,\,I\cap(-1,1)\ne\emptyset}
\frac{\langle b,h_I \rangle}{|I|}
\prod_{j=1}^{m}\frac{\langle f_j, h_I^{1+\alpha_j}\rangle}{|I|}
(h_I^{1+\sigma(\vec\alpha)}(x+h)-h_I^{1+\sigma(\vec\alpha)}(x))
\biggr|^pdx\biggr)^{1/p}
\\
&\le
C\|b'\|_{\infty}\prod_{j=1}^{m}\|f_j\|_{p_j}{|h|^{a}}.
\end{align*}
Thus, we obtain
\begin{equation*}
\lim_{h\to 0}\|{\pi}_b^{\vec\alpha}(x+h)-{\pi}_b^{\vec\alpha}(x)\|_p=0
\end{equation*}
uniformly for $\vec f$ with $\|f_j\|_{p_j}\le 1$ $(j=1,\dots,m)$. 
This shows that $\pi_b^{\vec{\alpha}}$ satisfies the condition (c).

Hence, by Fr\'{e}chet-Kolmogorov-Riesz-Tsuji theorem, it follows that
$\pi_b^{\vec{\alpha}}(\vec{f})$ is a compact operator.
\end{proof}
\noindent{\bf {Remark 3.1.}} 
The condition that $b\in \textit{CMO}$ is necessary by the reason that there exists 
$b\in L^\infty \subset \textit{BMO}$ such that $\pi_b^{\vec{\alpha}}$ is not 
a compact operator. To show this, we will construct an example.
 Let $k_0\in\mathbb N$ and $t\in[2^{-k_0},3\cdot2^{-k_0+1})$. Suppose that
\begin{equation*}
b=\sum_{k=1}^{\infty}(-1)^{k}\chi_{[1-2/2^{k}+1/2^{k+1},1-1/2^{k})}.
\end{equation*}
and
\begin{equation*}
f(x)=f_{k_0}(x)=
\begin{cases}
-2^{k_0} \ \ \ &\text{when  } x\in \frac{1}{2^{k_0}}\big([0,1)+2^{k_0}-2\big),
\\
0   \ \ \ &\text{otherwise  }.
\end{cases}
\end{equation*}
We assume that $0^0=0$ and $f_j=f^{1-\alpha_j}$. Then one can verify that
\begin{align*}
\limsup_{t\to0}\sup_{\|f_j\|_{L^{p_j}}\le 1}
\|\pi_b^{\vec{\alpha}}(\vec{f})(x+t)-\pi_b^{\vec{\alpha}}(\vec{f})(x)\|_{L^p}
&\gtrsim1.
\end{align*}
\par\smallskip
\section {Proof of Theorem  \ref{compaceness-commutators}}\label{TH5}
\vspace{0.5cm}
To begin with, we need to consider the strong type
 boundedness of these commutators. From \cite{IK2}, we know that the 
commutators in the $j$-th entry are bounded from $L^{p_1}\times L^{p_2}\times\cdots\times L^{p_m}
\rightarrow L^{p}$, if $b\in \mathit{BMO}$. Naturally, we ought to study the boundedness of iterated commutators and we obtain 
the following lemmas.\par
\begin{lemma}[Weighted strong bounds for $T_{\epsilon,\Pi\textbf{b}}^{\vec{\alpha}}$]\label{stong}
Let $\vec{p}=(p_1, \cdots, p_m)$ with $\frac{1}{p}=\frac{1}{p_1}+\cdots+\frac{1}{p_m}$ and $1<p_1, \cdots, p_m<\infty$. Let $\vec{\alpha}\in \{0,1\}^m\setminus\{1, \cdots, 1\}$ and 
$\epsilon=\{\epsilon_I\}_{I\in\mathcal{D}}$ be bounded. Suppose that
$\vec{b}=(b_1, \cdots, b_m)\in (\textit{BMO}^d)^m$, $\vec{\omega}\in A_{\vec{p}}^d$ and $\nu_{\vec{\omega}}=\prod_{j=1}^m\omega_j^{p/p_j}$. Then there exists a constant C such that
\begin{align}
\left\|T_{\epsilon,\Pi\textbf{b}}^{\vec{\alpha}}\right\|_{L^p(\nu_{\vec{\omega}})}\leq C\prod_{j=1}^m\left\|b_j\right\|_{\textit{BMO}_d}\prod_{i=1}^m\left\|f_i\right\|_{L^{p_i}(\omega_i)},
\end{align}
\end{lemma}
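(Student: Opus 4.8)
The plan is to reduce the weighted bound for the iterated commutator $T_{\epsilon,\Pi\textbf{b}}^{\vec\alpha}$ to the (already known) weighted bound for the plain Haar multiplier $T_\epsilon^{\vec\alpha}$ by expanding all the brackets and using the Cauchy integral / conjugation trick for $\mathit{BMO}$ functions. First I would record the algebraic expansion of the iterated commutator: expanding $[b_1,[b_2,\cdots[b_m,T_\epsilon^{\vec\alpha}]_m\cdots]_1$ produces a sum over subsets $S\subseteq\{1,\dots,m\}$ of terms of the form $(\pm)\bigl(\prod_{j\in S^c}b_j(x)\bigr)T_\epsilon^{\vec\alpha}\bigl((b_i f_i \text{ for }i\in S \text{ and }f_i\text{ for }i\notin S)\bigr)$ — i.e. on the $j$-th slot either $b_j$ is pulled outside the operator or it multiplies $f_j$ inside. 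So it suffices to bound each such term, and by symmetry (relabeling slots) it is enough to treat the "full" term where $b_j$ multiplies $f_j$ inside on every slot, together with the mixed ones; the outside factors $\prod_{j\in S^c}b_j$ are handled at the end by absorbing them via a duality/$H^1$–$\mathit{BMO}$ argument, exactly as in the Coifman–Rochberg–Weiss scheme.

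Next I would run the standard conjugation argument. For $z\in\mathbb C$ with $|z|$ small define $T_{z\vec b}(\vec f)=T_\epsilon^{\vec\alpha}\bigl(e^{z b_1}f_1,\dots,e^{z b_m}f_m\bigr)\cdot e^{-z(b_1+\dots+b_m)}$; this is holomorphic in $z$, and differentiating $m$ times in the appropriate mixed way at $z=0$ recovers (a constant times) the iterated commutator. By the Cauchy integral formula on a polydisc of radius $\delta$ comparable to $1/\max_j\|b_j\|_{\mathit{BMO}_d}$, the $L^p(\nu_{\vec\omega})$ norm of the iterated commutator is dominated by $\delta^{-m}\sup_{|z_j|=\delta}\|T_{z\vec b}(\vec f)\|_{L^p(\nu_{\vec\omega})}$. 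To estimate the latter I would apply Lemma~\ref{stong}'s base case — the weighted boundedness $T_\epsilon^{\vec\alpha}:\prod L^{p_j}(\omega_j')\to L^p(\nu_{\vec\omega'})$ — to the multiple weight $\omega_j'=\omega_j e^{-p_j\Re(z)b_j/(\text{stuff})}$; the point is that if $\delta\|b_j\|_{\mathit{BMO}_d}$ is small enough then $\vec\omega'\in A_{\vec p}^d$ with $A_{\vec p}^d$-characteristic controlled by that of $\vec\omega$, since $e^{cb}\in A_2^d$ (hence the right $A_{p_j}^d$-type perturbation) whenever $|c|\|b\|_{\mathit{BMO}_d}$ is below the John–Nirenberg threshold. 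Then $\|T_{z\vec b}(\vec f)\|_{L^p(\nu_{\vec\omega'})}\lesssim\prod_j\|e^{zb_j}f_j\|_{L^{p_j}(\omega_j')}=\prod_j\|f_j\|_{L^{p_j}(\omega_j)}$, and also $\nu_{\vec\omega'}$ and $\nu_{\vec\omega}$ are comparable on the relevant scale, so the weight change is harmless after accounting for the $e^{-z\sum b_j}$ factor via Hölder and the same exponential integrability.

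The main obstacle I anticipate is the uniform control of the $A_{\vec p}^d$ characteristic of the perturbed weights $\vec\omega'$ as $z$ ranges over the polydisc: one must check that the base-case constant $C=C(\vec\omega')$ in Lemma~\ref{stong}'s hypothesis stays bounded, which forces the radius $\delta$ to depend quantitatively on $[\vec\omega]_{A_{\vec p}^d}$ and on all the $\|b_j\|_{\mathit{BMO}_d}$ through the dyadic John–Nirenberg inequality; getting the dependence to come out as the clean product $\prod_j\|b_j\|_{\mathit{BMO}_d}$ (rather than some larger exponential) is where care is needed, and it is exactly why one conjugates with $e^{zb_j}$ scaled so that $\delta\asymp(\max_j\|b_j\|_{\mathit{BMO}_d})^{-1}$, producing the factor $\delta^{-m}=\prod$-type bound. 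Everything else — the holomorphy, the Cauchy formula, the routing of outside factors $\prod_{j\in S^c}b_j$ through duality against an $H^1$-atomic decomposition, and the final summation over subsets $S$ — is routine once the base case and the weight-perturbation lemma are in place. I would therefore structure the write-up as: (1) commutator expansion into $2^m$ pieces; (2) the exponential-conjugation/Cauchy-integral reduction for the full piece; (3) the $A_{\vec p}^d$-perturbation lemma with explicit smallness of $\delta$; (4) assembling the estimate and disposing of the mixed pieces by the same argument applied to fewer slots.
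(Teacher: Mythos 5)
The paper itself omits the proof of this lemma: it only remarks that the argument follows the sharp-maximal-function scheme of P\'erez--Pradolini--Torres--Trujillo-Gonz\'alez \cite{PPTT} and Xue \cite{X} (pointwise control of a sharp maximal function of the iterated commutator by $M_{L(\log L)}$-type maximal operators), made easier by Lemma 3.1 of \cite{IK2}. Your Cauchy-integral/conjugation route is therefore genuinely different, and it is in principle viable, since the base case you invoke (weighted bounds for $T_{\epsilon}^{\vec\alpha}$ for all $\vec\omega\in A_{\vec p}^d$ with constant controlled by the characteristic) is exactly what Kunwar's weighted paper supplies; the identity $\|e^{-\sum_jz_jb_j}T_{\epsilon}^{\vec\alpha}(e^{z_1b_1}f_1,\dots,e^{z_mb_m}f_m)\|_{L^p(\nu_{\vec\omega})}=\|T_{\epsilon}^{\vec\alpha}(e^{z_1b_1}f_1,\dots)\|_{L^p(\nu_{\vec\omega'})}$ with $\omega_j'=\omega_je^{-p_j\Re z_j\,b_j}$ even makes your ``comparable weights'' step exact.

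As written, though, there are concrete gaps. (i) Steps (1)/(4) of your outline --- expanding the iterated commutator into $2^m$ terms and bounding each term, handling the outside factors $\prod_{j\in S^c}b_j$ by $H^1$--$\mathit{BMO}$ duality --- cannot work: $b_j\in \mathit{BMO}$ is unbounded and $b_jf_j$ need not lie in $L^{p_j}(\omega_j)$, so the individual terms of the expansion are not separately bounded (that cancellation is the whole content of the lemma). Fortunately this step is unnecessary for your method and should simply be dropped. (ii) A single complex parameter $z$ does not isolate the iterated commutator: the $m$-th derivative at $z=0$ of $e^{-z\sum_jb_j}T_{\epsilon}^{\vec\alpha}(e^{zb_1}f_1,\dots,e^{zb_m}f_m)$ is a sum over all distributions of the $m$ derivatives among the slots (including repeated commutators in one slot). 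You need independent parameters $z_1,\dots,z_m$, one per symbol, and the mixed derivative $\partial_{z_1}\cdots\partial_{z_m}$ at the origin, which equals $(-1)^mT_{\epsilon,\Pi\mathbf b}^{\vec\alpha}(\vec f)$; Cauchy's formula on the polydisc of radii $\delta_j\simeq\|b_j\|_{\mathit{BMO}^d}^{-1}$ then yields the factor $\prod_j\|b_j\|_{\mathit{BMO}^d}$. (iii) The crux you flag, uniform membership of the perturbed weights in $A_{\vec p}^d$, is not justified by ``$e^{cb}\in A_2^d$'': for a genuinely multiple weight $\vec\omega\in A_{\vec p}^d$ the components $\omega_j$ need not lie in any Muckenhoupt class individually, so you must verify the multilinear $A_{\vec p}^d$ condition for $(\omega_je^{-p_j\Re z_j\,b_j})_j$ directly, e.g.\ via the dyadic John--Nirenberg inequality together with reverse H\"older applied to $\nu_{\vec\omega}\in A_{mp}^d$ and $\omega_j^{1-p_j'}\in A_{mp_j'}^d$, with the admissible radii depending on $[\vec\omega]_{A_{\vec p}^d}$ and $\|b_j\|_{\mathit{BMO}^d}$. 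With (ii) and (iii) supplied, your argument closes and gives an alternative to the sharp-maximal-function proof the paper points to.
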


\begin{lemma}[Weighted end-point estimate for $T_{\epsilon,\Pi\textbf{b}}^{\vec{\alpha}}$]\label{end-point}
Let $\vec{\alpha}\in \{0,1\}^m\setminus\{1, \cdots, 1\}$ and $\epsilon=\{\epsilon_I\}_{I\in\mathcal{D}}$ be bounded. Suppose $\vec{b}=(b_1, \cdots, b_m)\in \textit{BMO}_d^m$, $\vec{\omega}\in A_{(1, \cdots, 1)}^d$ and $\nu_{\vec{\omega}}=\prod_{j=1}^m\omega_j^{p/p_j}$. Then there exists a constant C such that
\begin{align}
\nu_{\vec{\omega}}(x\in\mathbb{R}: T_{\epsilon,\Pi\textbf{b}}^{\vec{\alpha}}(\vec{f})(x)>t^m)\leq C\left(\prod_{j=1}^m\Phi\left(\frac{|f_j(x)|}{t}\right)\omega_j(x)dx\right)^{\frac{1}{m}},
\end{align}
where $\Phi(t)=t(1+\log^+t)$ and $\Phi^{(m)}=\overbrace{\Phi\circ\cdots\circ\Phi}^m$.
\end{lemma}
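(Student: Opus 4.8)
The plan is to adapt the endpoint scheme for multilinear Calder\'on--Zygmund operators and their iterated commutators from \cite{LOPTT} (and, in the linear case, from earlier work of P\'erez) to the dyadic operator $T_\epsilon^{\vec\alpha}$. By homogeneity we may take $t=1$, and since $T_\epsilon^{\vec\alpha}$, the weights $\vec\omega\in A^d_{(1,\dots,1)}$, and all the auxiliary maximal operators are dyadic, we work throughout with dyadic cubes. The two external inputs are the weighted strong bounds of Lemma \ref{stong}, used with exponents slightly above $1$, and the pointwise consequence $\nu_{\vec\omega}(Q)\lesssim|Q|\prod_{j=1}^m(\essi_Q\omega_j)^{1/m}$ of the $A^d_{(1,\dots,1)}$ condition.

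The first main step is a dyadic sharp-maximal-function estimate for the commutators. For $S\subseteq\{1,\dots,m\}$, let $T^{\vec\alpha}_{\epsilon,S}$ denote $T_\epsilon^{\vec\alpha}$ commuted in the entries indexed by $S$, so that $T^{\vec\alpha}_{\epsilon,\emptyset}=T_\epsilon^{\vec\alpha}$ and $T^{\vec\alpha}_{\epsilon,\{1,\dots,m\}}=T^{\vec\alpha}_{\epsilon,\Pi\textbf{b}}$. I would prove, for every such $S$ and a suitable $0<\delta<1/m$, an inequality of the form
\[
M^{\#,d}_\delta\bigl(T^{\vec\alpha}_{\epsilon,S}(\vec f)\bigr)(x)\lesssim\|\epsilon\|_\infty\Bigl(\prod_{j\in S}\|b_j\|_{\textit{BMO}^d}\Bigr)\mathcal M_{L(\log L)^{|S|}}(\vec f)(x)+\sum_{R\subsetneq S}\Bigl(\prod_{j\in S\setminus R}\|b_j\|_{\textit{BMO}^d}\Bigr)M_\delta\bigl(T^{\vec\alpha}_{\epsilon,R}(\vec f)\bigr)(x),
\]
where $\mathcal M_{L(\log L)^k}$ is the multilinear maximal operator built from $L(\log L)^k$ averages (so the case $S=\emptyset$ is the sharp-maximal-function bound $M^{\#,d}_\delta(T_\epsilon^{\vec\alpha}\vec f)\lesssim\|\epsilon\|_\infty\mathcal M(\vec f)$). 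Because $T_\epsilon^{\vec\alpha}$ has an \emph{exact} Haar expansion with $|h_I^{1+\alpha_j}|\le\chi_I$ and $|h_I^{\sigma(\vec\alpha)}|\le\chi_I$, this is in fact cleaner than in the Calder\'on--Zygmund setting---there are no kernel-regularity error terms---and it follows by expanding each difference $b_j(y)-b_j(x)$ along the dyadic tower through $x$ and invoking the generalized H\"older inequality $\frac1{|Q|}\int_Q|b_j-\langle b_j\rangle_Q|\,|g|\lesssim\|b_j\|_{\textit{BMO}^d}\,\|g\|_{L\log L,\,Q}$, which is exactly what inserts one extra logarithm into the Orlicz bump each time a symbol is commuted in.

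Iterating this estimate from $S=\emptyset$ up to $S=\{1,\dots,m\}$ and applying the dyadic Fefferman--Stein inequality together with Lemma \ref{stong}, one obtains the weighted strong bounds for all exponents above the endpoint. The endpoint itself is then reached, as in \cite{LOPTT}, by a Calder\'on--Zygmund decomposition of the $f_j$ at a height comparable to $1$: the ``all good'' piece is controlled by those strong bounds and Chebyshev's inequality, while the pieces containing a ``bad'' factor are treated using the dyadic localization of $T_\epsilon^{\vec\alpha}$, the mean-zero property of the bad functions on the Calder\'on--Zygmund cubes, the inequality $\nu_{\vec\omega}(Q)\lesssim|Q|\prod_j(\essi_Q\omega_j)^{1/m}$, and the weak-type endpoint bound
\[
\nu_{\vec\omega}\bigl(\{x:\mathcal M_{L(\log L)^m}(\vec f)(x)>1\}\bigr)\lesssim\prod_{j=1}^m\Bigl(\int_{\mathbb R}\Phi^{(m)}(|f_j(x)|)\,\omega_j(x)\,dx\Bigr)^{1/m},
\]
which is the endpoint property of the iterated $L(\log L)$ maximal operator established in \cite{LOPTT}; the $m$-fold $L(\log L)$-averaging is precisely what produces $\Phi^{(m)}=\Phi\circ\cdots\circ\Phi$ with $\Phi(s)=s(1+\log^+s)$.

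I expect the main difficulty to be the bookkeeping rather than any isolated hard estimate. Expanding the nested commutator $[b_1,[b_2,\dots[b_m,T_\epsilon^{\vec\alpha}]_m\dots]_2]_1$ produces $2^m$ terms, one for each subset $R\subseteq\{1,\dots,m\}$ recording in which entries $T_\epsilon^{\vec\alpha}$ is commuted, and each such term carries a factor $\prod_{j\notin R}|b_j(x)-\langle b_j\rangle_Q|$ that must be absorbed on the Calder\'on--Zygmund cubes; one has to verify at every stage of the iteration that the Orlicz bump grows by exactly one logarithmic power, so that nothing worse than $\Phi^{(m)}$ survives on the right-hand side. A minor additional point is that the sharp-maximal-function inequalities and the reduction above are first carried out for bounded, compactly supported $\vec f$, so that all the quantities involved are finite, the general case following by a routine truncation and monotone-convergence argument.
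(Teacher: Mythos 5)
Your outline is essentially the approach the paper itself intends: the paper omits the proof of this lemma, stating that the ideas and main steps are almost the same as in \cite{PPTT} and \cite{X} (with Lemma 3.1 of \cite{IK2} simplifying matters), and your plan—dyadic sharp-maximal estimates with an $L(\log L)^{|S|}$ bump per commuted entry, Fefferman--Stein plus Lemma \ref{stong} for the strong bounds, and a Calder\'on--Zygmund decomposition combined with the endpoint bound for $\mathcal M_{L(\log L)^m}$ from \cite{LOPTT}—is precisely that scheme, correctly adapted to the dyadic setting (and it also correctly reads the right-hand side as involving $\Phi^{(m)}$, fixing the typo in the stated inequality).
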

The ideas and main steps of proofs for Lemmas \ref{stong} and \ref{end-point} 
are almost the same as in \cite{PPTT} and \cite{X}.
Moreover, Lemma 3.1 of \cite{IK2} makes the proofs more easier. Here we 
omit the proofs. 
\par\smallskip 
Now we return to the proof of Theorem  \ref{compaceness-commutators}.
\textbf{(i)} First, we shall prove the compactness of commutator 
$[b,\,T_{\epsilon}^{\vec{\alpha}}]_i$. 
By its boundedness, verification of condition (a) is trivial and we will only 
prove that $[b,\,T_{\epsilon}^{\vec{\alpha}}]_i$ satisfies conditions (b) and 
(c) for its compactness. Firstly, we may assume $b\in C_c^\infty(\mathbb R)$ 
with $\supp b\subset (-1,1)$ and $f_j\in L^{p_j}(\mathbb R)$ $(1<p_j<\infty)$.
For $k\ge1$, by the supports of $b$ and $h_I$, it holds that
\begin{align*}
&\int_{|x|\ge2^k}|[b,\,T_{\epsilon}^{\vec\alpha}]_i(\vec f)(x)|^pdx
\\
&=\int_{|x|\ge2^k}\biggl|\sum_{I=[0,2^\ell), [-2^\ell,0),\,\ell\ge1}
\epsilon_I \frac{\langle bf_i, h_I^{1+\alpha_i}\rangle}{|I|}
\prod_{1\le j\le m,\,j\ne i}\frac{\langle f_j, h_I^{1+\alpha_j}\rangle}{|I|}
h_I^{\sigma(\vec\alpha)}(x)\biggr|^pdx
\\
&\le C\int_{|x|\ge2^k}\biggl(\sum_{I=[0,2^\ell), [-2^\ell,0),\,\ell\ge k}
|\epsilon_I| \|b\|_{\infty}
\frac{\|f_i\chi_{(-1,1)}\|_{p_i}}{|I|}
\prod_{1\le j\le m,\,j\ne i}\|f_j\|_{p_j}
\frac{|I|^{1/p'_j}}{|I|}
\chi_{I}(x)\biggr)^pdx
\\
&\le C\Bigl(\|\epsilon\|_{\infty}\|b\|_{\infty}\prod_{j=1}^m\|f_j\|_{p_j}\Bigr)^p
\int_{|x|\ge2^k}\biggl(\sum_{I=[0,2^\ell), [-2^\ell,0),\,\ell\ge k}
{|I|^{-1/p-1+1/p_i}}{\chi_{I}(x)}\biggr)^pdx
\\
&\le C\Bigl(\|\epsilon\|_{\infty}\|b\|_{\infty}\prod_{j=1}^m\|f_j\|_{p_j}\Bigr)^p
\int_{|x|\ge2^k}\biggl(\sum_{\ell\ge k}
2^{-\ell (1/p+1/p'_i)}{\chi_{[-2^\ell,2^\ell)}(x)}\biggr)^pdx
\\
&\le C\Bigl(\|\epsilon\|_{\infty}\|b\|_{\infty}\prod_{j=1}^m\|f_j\|_{p_j}\Bigr)^p
\sum_{\ell=k}^{\infty}\int_{2^\ell}^{2^{\ell+1}}
\biggl(2^{-\ell (1/p+1/p'_i)}\biggr)^pdx
\\
&= C\Bigl(\|\epsilon\|_{\infty}\|b\|_{\infty}\prod_{j=1}^m\|f_j\|_{p_j}\Bigr)^p
\sum_{\ell=k}^{\infty}2^{-\ell p/p'_i}
\\
&=C
\Bigl(\|\epsilon\|_{\infty}\|b\|_{\infty}\prod_{j=1}^m\|f_j\|_{p_j}\Bigr)^p
2^{-k p/p'_i}.
\end{align*}
Hence we have
\begin{equation*}
\lim_{A\to\infty}
\int_{|x|\ge A}|[b,\,T_{\epsilon}^{\vec\alpha}]_i(\vec f)(x)|^pdx=0,
\end{equation*}
uniformly for $\vec f$ with $\|f_j\|_{p_j}\le 1$ $(1\le j\le m)$. Consequently, when $b\in \textit{CMO}$, $[b,\,T_{\epsilon}^{\vec{\alpha}}]_i$ satisfies the condition (b) for any $1\le i\le m$ and $\vec{\alpha}\in \{0,1\}^m\setminus\{1,\cdots,1\}$.

Let $|h|<1$. We can rewrite $[b,\,T_{\epsilon}^{\vec{\alpha}}]_i(\vec f)(x+h)-[b,\,T_{\epsilon}^{\vec{\alpha}}]_i(\vec f)(x)$ in the following way.
\begin{align*}
&[b,\,T_{\epsilon}^{\vec{\alpha}}]_i(\vec f)(x+h)-[b,\,T_{\epsilon}^{\vec{\alpha}}]_i(\vec f)(x)
\\
&=b(x+h)T_{\epsilon}^{\vec\alpha}(\vec f)(x+h)
-T_{\epsilon}^{\vec\alpha}(f_1,\dots,b\,f_i,f_{i+1},\dots,f_m)(x+h)
\\
&\hspace{2.5cm}
 -b(x)T_{\epsilon}^{\vec\alpha}(\vec f)(x)
 +T_{\epsilon}^{\vec\alpha}(f_1,\dots,b\,f_i,f_{i+1},\dots,f_m)(x)
\\
&=(b(x+h)-b(x))T_{\epsilon}^{\vec{\alpha}}(\vec f)(x+h)
\\
&+\sum_{I\in\mathcal D}\epsilon_I(b(x)-b(x_I))
\prod_{j=1}^{m}\frac{\langle f_j, h_I^{1+\alpha_j}\rangle}{|I|}
h_I^{\sigma(\vec\alpha)}(x+h)
\\
&+\sum_{I\in\mathcal D}\epsilon_I\frac{\langle (b(x_I)-b)f_i, h_I^{1+\alpha_i}\rangle}{|I|}
\prod_{1\le j\le m, j\ne i}\frac{\langle f_j, h_I^{1+\alpha_j}\rangle}{|I|}
h_I^{\sigma(\vec\alpha)}(x+h)
\\
&-\sum_{I\in\mathcal D}\epsilon_I(b(x)-b(x_I))
\prod_{j=1}^{m}\frac{\langle f_j, h_I^{1+\alpha_j}\rangle}{|I|}
h_I^{\sigma(\vec\alpha)}(x)
\\
&+\sum_{I\in\mathcal D}\frac{\epsilon_I\langle (b(x_I)-b)f_i, h_I^{1+\alpha_i}\rangle}{|I|}
\prod_{1\le j\le m, j\ne i}\frac{\langle f_j, h_I^{1+\alpha_j}\rangle}{|I|}
h_I^{\sigma(\vec\alpha)}(x)
\\
&=: \mathbb I_1+\mathbb I_2+\mathbb I_3 -\mathbb I_4 -\mathbb I_5,
\end{align*}
where $x_I$ is the center of the dyadic interval $I$.

For $p$ with $1/p=1/p_1+\dots+1/p_m$, by the boundedness of
$T_{\epsilon}^{\vec{\alpha}}$, we obtain
\begin{equation*}\label{eq:T-cpt-1}
\|\mathbb I_1\|_p\le C\|b'\|_{\infty}|h|\prod_{j=1}^{m}\|f_j\|_{p_j}.
\end{equation*}
Now, we estimate $\|\mathbb I_2-\mathbb I_4\|_p$. Similar as in the proof of 
Theorem \ref{compactness-pi}, we get
\begin{align*}
&\biggl(\int\biggl|\sum_{|I|\ge1,\,I\cap(-1,1)\ne\emptyset}\epsilon_I(b(x)-b(x_I))
\prod_{j=1}^{m}\frac{\langle f_j, h_I^{1+\alpha_j}\rangle}{|I|}
(h_I^{\sigma(\vec\alpha)}(x+h)-h_I^{\sigma(\vec\alpha)}(x))\biggr|^pdx
\biggr)^{1/p}
\\
&\le \sum_{|I|\ge1,\,I\cap(-1,1)\ne\emptyset}\biggl(\int\biggl|\epsilon_I(b(x)-b(x_I))
\prod_{j=1}^{m}\frac{\langle f_j, h_I^{1+\alpha_j}\rangle}{|I|}
(h_I^{\sigma(\vec\alpha)}(x+h)-h_I^{\sigma(\vec\alpha)}(x))\biggr|^pdx
\biggr)^{1/p}
\\
&\le C \sum_{|I|\ge1,\,I\cap(-1,1)\ne\emptyset}\|b\|_{\infty}\|\{\epsilon_I\}\|_{\infty}\prod_{j=1}^{m}\|f_j\|_{p_j}
\frac{|h|^{1/p}}{|I|^{1/p}}
\le C\|b\|_{\infty}\|\{\epsilon_I\}\|_{\infty}\prod_{j=1}^{m}\|f_j\|_{p_j}{|h|^{1/p}}.
\end{align*}
If $|h|<|I|$, we take any $a<\min\{1,1/p\}$ and have $|h|^{1/p}|I|^{1-1/p}<|h|^{a}|I|^{1-a}$. Then we obtain
\begin{align*}
&\biggl(\int|\epsilon_I(b(x)-b(x_I))\prod_{j=1}^{m}\frac{\langle f_j, h_I^{1+\alpha_j}\rangle}{|I|}
(h_I^{\sigma(\vec\alpha)}(x+h)-h_I^{\sigma(\vec\alpha)}(x))|^pdx\biggr)^{1/p}
\\
&\le
\|b'\|_{\infty}\|\{\epsilon_I\}\|_{\infty}|I| \prod_{j=1}^{m}|\frac{\langle f_j,h_I^{1+\alpha_j}\rangle}{|I|} |
\biggl(\int|h_I^{\sigma(\vec\alpha)}(x+h)-h_I^{\sigma(\vec\alpha)}(x)|^pdx
\biggr)^{1/p}
\\
&\le C\|b'\|_{\infty}\|\{\epsilon_I\}\|_{\infty}
\prod_{j=1}^{m}\|f_j\chi_I\|_{p_j}{|h|^{1/p}}{|I|^{1-1/p}}
\\
&\le C\|b'\|_{\infty}\|\{\epsilon_I\}\|_{\infty}
\prod_{j=1}^{m}\|f_j\chi_I\|_{p_j}{|h|^{a}}{|I|^{1-a}}.
\end{align*}
When $|h|\ge|I|$, as in proof of Theorem \ref{compactness-pi}, we have
\begin{align*}
&\biggl(\int|\epsilon_I(b(x)-b(x_I))\prod_{j=1}^{m}\frac{\langle f_j, h_I^{1+\alpha_j}\rangle}{|I|}
(h_I^{\sigma(\vec\alpha)}(x+h)-h_I^{\sigma(\vec\alpha)}(x))|^pdx\biggr)^{1/p}
\\
&\le C\|b'\|_{\infty}\|\{\epsilon_I\}\|_{\infty}
\prod_{j=1}^{m}\|f_j\chi_I\|_{p_j}{|I|}
\le C\|b'\|_{\infty}\|\{\epsilon_I\}\|_{\infty}
\prod_{j=1}^{m}\|f_j\chi_I\|_{p_j}{|h|^{a}}{|I|^{1-a}}.
\end{align*}

Thus, for every $\ell\in\mathbb N$, it holds that
\begin{align*}
&
\biggl(\int\biggl|\sum_{|I|=2^{-\ell}}\epsilon_I
(b(x)-b(x_I))\prod_{j=1}^{m}\frac{\langle f_j, h_I^{1+\alpha_j}\rangle}{|I|}
(h_I^{\sigma(\vec\alpha)}(x+h)-h_I^{\sigma(\vec\alpha)}(x))\biggr|^pdx
\biggr)^{1/p}
\\
&\le
C\|b'\|_{\infty}\|\{\epsilon_I\}\|_{\infty}{|h|^{a}}\sum_{|I|=2^\ell}
{|I|^{1-a}}\prod_{j=1}^{m}\|f_j\chi_I\|_{p_j}
\\
&\le C\|b'\|_{\infty}\|\{\epsilon_I\}\|_{\infty}{|h|^{a}}{2^{-(1-a)\ell}}
\prod_{j=1}^{m}\|f_j\|_{p_j}.
\end{align*}
This leads to
\begin{align*}
&
\biggl(\int\biggl|\sum_{|I|<1,\,I\cap(-1,1)\ne\emptyset}\epsilon_I
(b(x)-b(x_I))\prod_{j=1}^{m}\frac{\langle f_j, h_I^{1+\alpha_j}\rangle}{|I|}
(h_I^{\sigma(\vec\alpha)}(x+h)-h_I^{\sigma(\vec\alpha)}(x))\biggr|^pdx
\biggr)^{1/p}
\\
&\le
C\|b'\|_{\infty}\|\{\epsilon_I\}\|_{\infty}\prod_{j=1}^{m}\|f_j\|_{p_j}{|h|^{a}}.
\end{align*}
Then we have $\|\mathbb I_2-\mathbb I_4\|_p\le C\|b'\|_{\infty}\|\{\epsilon_I\}\|_{\infty}\prod_{j=1}^{m}\|f_j\|_{p_j}{|h|^{a}}$. The estimate of $\|\mathbb I_3-\mathbb I_5\|_p$ is similar and we omit the details.\par
Therefore, we have shown that
\begin{equation*}
\lim_{h\to 0}\|[b,\,T_{\epsilon}^{\vec{\alpha}}]_i(\vec f)(x+h)-[b,\,T_{\epsilon}^{\vec{\alpha}}]_i(\vec f)(x)\|_p=0
\end{equation*}
uniformly for $\vec f$ with $\|f_j\|_{p_j}\le 1$ $(j=1,\dots,m)$. \qed
\par\smallskip
\textbf{(ii)} Proof of compactness for iterated commutators. 
We will need the following lemma.
\begin{lemma}\label{lemma1}
Let T be a multilinear operator and $b\in \textit{CMO}$. Suppose that T is a compact operator from $L^{p_1}(\mathbb{R})\times\cdots\times L^{p_m}(\mathbb{R})$ to $L^p(\mathbb{R})$. Then for any $i$ with $1\le i\le m$, $[b,T]_i$ is a compact operator.
\end{lemma}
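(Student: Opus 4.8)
The plan is to deduce the compactness of $[b,T]_i$ from the compactness of $T$ itself by approximating $b\in\textit{CMO}$ by smooth compactly supported functions and then using a uniform-in-$\varphi$ bound together with density. First I would recall that, since $\textit{CMO}$ is by definition the $\textit{BMO}$-closure of $C_c^\infty(\mathbb R)$, for every $\eps>0$ there is $\varphi\in C_c^\infty(\mathbb R)$ with $\|b-\varphi\|_{\textit{BMO}}<\eps$. Writing $[b,T]_i=[\varphi,T]_i+[b-\varphi,T]_i$, the second term is an operator of small norm: indeed, from the weighted strong bounds already available (Lemma \ref{stong} applied to the relevant operator, or simply the unweighted boundedness of the commutator from \cite{IK2}, whose constant is linear in $\|b-\varphi\|_{\textit{BMO}}$), one has $\|[b-\varphi,T]_i\|_{L^{p_1}\times\cdots\times L^{p_m}\to L^p}\le C\eps$. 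Hence $[b,T]_i$ is the operator-norm limit of the operators $[\varphi,T]_i$ with $\varphi\in C_c^\infty$. Since the space of compact multilinear operators is closed in the operator norm, it suffices to prove that $[\varphi,T]_i$ is compact whenever $\varphi\in C_c^\infty(\mathbb R)$.

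For the smooth compactly supported case, fix $\varphi\in C_c^\infty(\mathbb R)$ and write $[\varphi,T]_i(\vec f)=\varphi\cdot T(\vec f)-T(f_1,\dots,\varphi f_i,\dots,f_m)$. The key point is that, because $\varphi$ is bounded, the map $\vec f\mapsto(f_1,\dots,\varphi f_i,\dots,f_m)$ is bounded on $L^{p_1}\times\cdots\times L^{p_m}$, so $\vec f\mapsto T(f_1,\dots,\varphi f_i,\dots,f_m)$ is compact as a composition of a bounded operator with the compact operator $T$. It remains to show that $\vec f\mapsto \varphi\cdot T(\vec f)$ is compact; but this is multiplication by the fixed bounded function $\varphi$ composed with the compact operator $T$, and multiplication by a bounded (indeed compactly supported) function is a bounded operator on $L^p$, so again the composition is compact. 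Therefore $[\varphi,T]_i$ is a difference of two compact operators, hence compact.

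The main obstacle — really the only subtle point — is making the approximation step rigorous at the level of \emph{compact} rather than merely bounded operators: one must know both that composition of a compact multilinear operator with bounded (multi)linear maps on either side is compact, and that a norm limit of compact multilinear operators is compact. The first is routine (precompact image maps under a continuous map to a precompact set, and precompact preimage under a bounded map has precompact image), and the second follows from the standard diagonal/total-boundedness argument exactly as in the linear case, which applies verbatim to multilinear operators once one fixes the unit multi-ball of $L^{p_1}\times\cdots\times L^{p_m}$ as the domain. I would state these two facts as routine observations (or cite the analogous statements in \cite{BT} or \cite{DMX}) and then the proof is complete. Note also that strictly speaking the decomposition $\varphi\cdot T(\vec f)-T(f_1,\dots,\varphi f_i,\dots,f_m)$ already makes the argument work directly for $\varphi\in C_c^\infty$ without any approximation; the $\textit{CMO}$ approximation is needed only to pass from smooth $\varphi$ to general $b\in\textit{CMO}$, which is where the $\textit{BMO}$-continuity of $b\mapsto[b,T]_i$ enters.
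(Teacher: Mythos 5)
Your argument is correct in substance, but it follows a genuinely different route from the paper. The paper proves the lemma by verifying the Fr\'echet--Kolmogorov--Riesz--Tsuji conditions (a)--(c) directly for $[b,T]_i$ with $b\in C_c^\infty$: condition (b) is obtained from the uniform tail decay of $T(\vec f)$ over the unit ball (which follows from the compactness of $T$ via the same theorem), and condition (c) from a three-term splitting of $[b,T]_i(\vec f)(x+h)-[b,T]_i(\vec f)(x)$ in which the term $(b(x+h)-b(x))T(\vec f)(x+h)$ is handled by $\|b'\|_\infty|h|$ and the remaining terms by the uniform $L^p$-translation continuity of $T(\vec f)$ and of $T(M^i_b(\vec f))$. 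You instead observe that for a bounded symbol $\varphi$ the identity $[\varphi,T]_i(\vec f)=\varphi\,T(\vec f)-T(f_1,\dots,\varphi f_i,\dots,f_m)$ exhibits the commutator as a difference of two operators, each a composition of the compact operator $T$ with a bounded map (post-multiplication by $\varphi$ on $L^p$, respectively pre-multiplication in the $i$-th slot), hence compact by the ideal property; no smoothness of $\varphi$ and no FKRZ-type verification is needed at this stage. This is cleaner and in fact shows more (compactness of $[\varphi,T]_i$ for every $\varphi\in L^\infty$), whereas the paper's direct verification needs $\varphi'\in L^\infty$. One caveat you should make explicit: your reduction from $b\in\textit{CMO}$ to $\varphi\in C_c^\infty$ uses the operator-norm estimate $\|[b-\varphi,T]_i\|\le C\|b-\varphi\|_{\textit{BMO}}$, which is \emph{not} available for an abstract compact multilinear $T$ (for unbounded $b-\varphi\in\textit{BMO}$ the expression $T(f_1,\dots,(b-\varphi)f_i,\dots,f_m)$ need not even be defined); it relies on commutator bounds specific to the operators at hand, such as Lemma \ref{stong} or the bounds of \cite{IK2}, which do hold for the operators to which the lemma is applied. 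The paper's own reduction ``we may assume $b\in C_c^\infty$'' rests on the same implicit density argument, so this is a shared limitation of the lemma as stated rather than a defect peculiar to your proof, but since you invoke the approximation explicitly you should state explicitly which commutator bound supplies the linear-in-$\textit{BMO}$ control for the relevant $T$.
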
 
\begin{proof}
To illustrate $[b,\,T]_i$ is a compact operator, we only need to verify 
conditions (a), (b) and (c). 
Let $\mathcal K=\{[b,\,T]_i(\vec{f}): \|f\|_{p_j}\le 1, j=1,\cdots,m\}$. 
We can deduce that $T$ is bounded operator because $T$ is a compact operator.
Therefore, we have
\begin{align*}
\|[b,\,T]_i(\vec{f})\|_p&\le \|b\|_{\infty}\|T(\vec{f})\|_p+\|T(f_1,\cdots,bf_i,\cdots,f_m)\|_p
\\
&\le \Big(\|b\|_{\infty}\prod_{j=1}^m\|f_j\|_{p_j}+\prod_{j\ne i}\|f_j\|_{p_j}\|bf_i\|_{p_i}\Big)
\\
&\le 2\|b\|_{\infty}\prod_{j=1}^m\|f_j\|_{p_j}\le 2\|b\|_{\infty},
\end{align*}
which implies that the condition (a) holds. To check the 
conditions (b) and (c). 
By the boundedness of 
$[b,\,T]_i(\vec{f})$, we may assume $b\in C_c^\infty(\mathbb R)$. Due to the compactness of $T$, 
by the Fr\'{e}chet-Kolmogorov-Riesz-Tsuji theorem, we have 
\begin{equation}\label{equation1}
\lim_{A\to\infty}\sup_{\|f_j\|_{p_j}\le 1}\int_{|x|\ge A}|T(\vec f)(x)|^p dx=0,
\end{equation}
\begin{equation}\label{equation2}
\lim_{h\to 0}\sup_{\|f_j\|_{p_j}\le 1}\|T(\vec f)(x+h)-T(\vec f)(x)\|_p=0.
\end{equation}
We assume that $M_g^i(\vec f)=(f_1,\cdots,gf_i,\cdots,f_m)$. 
From (\ref{equation1}), we see that
\begin{align*}
&\lim_{A\to\infty}
\sup_{\|f_j\|_{p_j}\le 1}\int_{|x|\ge A}|[b,\,T]_i(\vec f)(x)|^p dx
\\
&\lesssim\|b\|_{\infty}^p\lim_{A\to\infty}
\sup_{\|f_j\|_{p_j}\le 1}\Big(\int_{|x|\ge A}|T(\vec f)(x)|^p dx
+\int_{|x|\ge A}|T(M_{\frac{b}{\|b\|_{\infty}}}^i(\vec f))(x)|^p dx\Big)=0.
\end{align*}
Obviously, it follows that
\begin{align*}
&\lim_{h\to 0}
\sup_{\|f_j\|_{p_j}\le 1}\|[b,\,T]_i(\vec f)(x+h)-[b,\,T]_i(\vec f)(x)\|_p
\\
&\le \lim_{h\to 0
}\sup_{\|f_j\|_{p_j}\le 1}\Big(\|b(x+h)T(\vec f)(x+h)-b(x)T(\vec f)(x+h)\|_p
\\
&+\|b(x)T(\vec f)(x+h)-b(x)T(\vec f)(x)\|_p
+\|T(M_b^i(\vec f))(x+h)-T(M_b^i(\vec f))(x)\|_p\Big).
\end{align*}
By (\ref{equation2}) and the boundedness of $T$, we deduce that
\begin{align*}
&\lim_{h\to 0}\sup_{\|f_j\|_{p_j}\le 1}\|[b,\,T]_i(\vec f)(x+h)-[b,\,T]_i(\vec f)(x)\|_p
\\
&\le \|b'\|_{\infty}\lim_{h\to 0}
\sup_{\|f_j\|_{p_j}\le 1}|h|\|T(\vec f)(x+h)\|_p
+\|b\|_{\infty}\lim_{h\to 0}
\sup_{\|f_j\|_{p_j}\le 1}\|T(\vec f)(x+h)-T(\vec f)(x)\|_p
\\
&+\|b\|_{\infty}\lim_{h\to 0}\sup_{\|f_j\|_{p_j}\le 1}
\|T(M_{\frac{b}{\|b\|_{\infty}}}^i(\vec f))(x+h)
-T(M_{\frac{b}{\|b\|_{\infty}}}^i(\vec f))(x)\|_p=0.
\end{align*}
Therefore, $[b,\,T]_i$ is a compact operator for any $i$ with 
$1\le i\le m$.
\end{proof}
Now, by Lemma \ref{lemma1} and the compactness of commutator 
$[b,\,T_{\epsilon}^{\vec \alpha}]_i$, we can deduce that iterated commutators 
$T_{\epsilon,\Pi\textbf{b}}^{\vec{\alpha}}$ is a compact operator for all 
$\vec\alpha\in\{0,1\}^m\setminus\{1,\cdots,1\}$.
\\

\textbf{(iii)} Proof of the compactness of $[b,\mathbb S]$.
We need the following lemma for $[b,\mathbb S]$.
\begin{lemma}[]\label{boundedness-[b,S]}
Let $b\in\mathrm{BMO}(\mathbb R^d)$ and $1<p<\infty$. Then, 
for any $1<p<\infty$ there exists $C>0$ such that
\begin{equation*}
\|[b,\mathbb S](f)\|_p\le C\|b\|_{\mathrm{BMO}}\|f\|_p.
\end{equation*}
\end{lemma}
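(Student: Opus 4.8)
The plan is to expand the commutator at the level of its defining double sum and split it into finitely many pieces, each of which is one of the operators already known to be bounded on $L^p(\mathbb R^d)$: the shift $\mathbb S$ itself (\cite{NTV}, \cite{LPR}), a generalized dyadic shift with a bounded symbol (still covered by the framework of \cite{NTV}), the dyadic paraproduct $\pi_b$, or the ``diagonal'' paraproduct against $b$ (both bounded $L^p\to L^p$ with norm $\lesssim\|b\|_{\textit{BMO}}$ by standard paraproduct estimates, cf.\ \cite{IK1}). Concretely, write
\[
[b,\mathbb S]f(x)=\sum_{I\in\mathcal{D}}\frac1{|I|}\sum_{\substack{I',I''\subset I\\ l(I')=2^{-m}l(I),\ l(I'')=2^{-n}l(I)}}h_{I''}(x)\bigl(b(x)\langle f,h_{I'}\rangle-\langle bf,h_{I'}\rangle\bigr),
\]
and insert $b(x)=(b(x)-\langle b\rangle_{I''})+\langle b\rangle_{I''}$ together with $\langle bf,h_{I'}\rangle=\langle(b-\langle b\rangle_{I'})f,h_{I'}\rangle+\langle b\rangle_{I'}\langle f,h_{I'}\rangle$. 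The terms carrying $\langle b\rangle_{I''}$ and $\langle b\rangle_{I'}$ combine into a single scalar, giving a decomposition $[b,\mathbb S]f=U_1+U_2-U_3$ in which $U_1$ carries the factor $b(x)-\langle b\rangle_{I''}$, $U_2$ carries the scalar $\langle b\rangle_{I''}-\langle b\rangle_{I'}$, and $U_3$ carries the localized oscillation $(b-\langle b\rangle_{I'})\mathbf 1_{I'}$ paired against $h_{I'}$.

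The term $U_2$ is the easy one: because $I'$ and $I''$ are joined to $I$ by dyadic chains of length at most $\max(m,n)$, and $|\langle b\rangle_{J}-\langle b\rangle_{J^{(1)}}|\le C(d)\|b\|_{\textit{BMO}}$ for one step of the chain, we get $|\langle b\rangle_{I''}-\langle b\rangle_{I'}|\le C(d)\max(m,n)\|b\|_{\textit{BMO}}$ uniformly; hence $U_2$ is a generalized dyadic shift whose symbol is pointwise $\le C(d)\max(m,n)\|b\|_{\textit{BMO}}/|I|$ and still has mean zero in each variable, so it is bounded on $L^p$ with norm $\lesssim\max(m,n)\|b\|_{\textit{BMO}}$ by exactly the argument that bounds $\mathbb S$. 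For $U_1$ and $U_3$ I would expand the oscillations into Haar series, $(b-\langle b\rangle_Q)\mathbf 1_Q=\sum_{R\subseteq Q}\tfrac{\langle b,h_R\rangle}{|R|}h_R$, and use that $h_Rh_Q$ is a bounded multiple of $h_R$ when $R\subsetneq Q$ while $h_Q^2=\mathbf 1_Q$, in order to collapse the triple sums. After reindexing one finds that $U_1$ becomes $\pi_b(Gf)+\Lambda_b(Gf)$ and $U_3$ becomes $G(\pi_b f)$ plus terms of the same type, where $\pi_b$ is the dyadic paraproduct, $\Lambda_b$ the diagonal paraproduct against $b$, and $G$ is the generalized dyadic shift $Gf=\sum_I\frac1{|I|}\langle f,\psi_I\rangle\,\psi'_I$ with $\psi_I=\sum_{I'\subset I,\,l(I')=2^{-m}l(I)}h_{I'}$ and $\psi'_I=\sum_{I''\subset I,\,l(I'')=2^{-n}l(I)}h_{I''}$ (unit-normalized symbol, mean zero in each variable, hence $L^p$-bounded again by \cite{NTV}). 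Collecting the bounds for $U_1,U_2,U_3$ gives $\|[b,\mathbb S]f\|_p\le C(m,n)\|b\|_{\textit{BMO}}\|f\|_p$.

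The hard part is the bookkeeping in this last step: enumerating the possible positions of the auxiliary cube $R$ relative to $I'$ and $I''$, checking that after resummation each configuration is a constant times one operator from the fixed list above, and verifying that all constants stay controlled by a function of the complexity $(m,n)$, which is what makes the finite sum of bounded pieces legitimate. I note that the statement also follows at once from the Coifman--Rochberg--Weiss theorem, since the dyadic shift with parameters $(m,n)$ is a Calder\'on--Zygmund operator --- it is $L^2$-bounded and its kernel $K(x,y)=\sum_{I\ni x,y}|I|^{-1}a_I(x,y)$ obeys the standard size and H\"older estimates with constants polynomial in $(m,n)$ (cf.\ \cite{HPTV}); I would record this as a remark but present the dyadic argument above for consistency with the rest of the paper.
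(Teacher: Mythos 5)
Your main argument is essentially correct but takes a genuinely different route from the paper. The paper proves a pointwise sharp-function estimate: for $x\in I_0\in\mathcal D$ it writes $[b,\mathbb S]f=(b-b_{I_0})\mathbb Sf-\mathbb S\big((b-b_{I_0})f\chi_{I_0^n}\big)-\mathbb S\big((b-b_{I_0})f\chi_{(I_0^n)^c}\big)$, observes that the third term is \emph{constant} on $I_0$ (the relevant $h_{I''}$ with $I''\supsetneq I_0$ are constant on $I_0$), deduces $M^{\#}([b,\mathbb S]f)\le C\|b\|_{\mathrm{BMO}}\big(M_s^d(\mathbb Sf)+M_s^d f\big)$, and finishes with Fefferman--Stein plus a truncation $b_j$ of $b$ to secure the a priori finiteness of $\|[b,\mathbb S]f\|_p$. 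You instead decompose algebraically, $[b,\mathbb S]=U_1+U_2-U_3$, identify $U_1$ and $U_3$ (after Haar-expanding the oscillations and using $h_Rh_Q=\mathrm{const}\cdot h_R$ for $R\subsetneq Q$, together with $\sum_{J\supsetneq R}c_Jh_J\big|_R=\langle g\rangle_R$) as compositions of the shift with the paraproducts $\pi_b^{(1)},\pi_b^{(0)}$ and their adjoints, and bound $U_2$ as a shift whose coefficients $\langle b\rangle_{I''}-\langle b\rangle_{I'}$ are $\le C(d)(m+n)\|b\|_{\mathrm{BMO}}$ by telescoping. This does work, and it is arguably more structural: it exhibits $[b,\mathbb S]$ as a finite sum of canonical bounded dyadic operators, which is reusable (e.g.\ for weighted bounds), whereas the paper's argument is shorter, needs only the $L^p$ bound for $\mathbb S$, and avoids all resummation. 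What your write-up leaves unverified is exactly the bookkeeping you defer, plus two smaller points you should not skip: in $\mathbb R^d$ the shift's $h_{I'},h_{I''}$ are not orthonormal Haar basis functions, so the expansion of $b-\langle b\rangle_Q$ must be carried out in the full Haar system ($2^d-1$ generators per cube), and for $b$ merely in $\mathrm{BMO}$ the interchanges of (infinite) sums require a density or truncation step, just as the paper's proof needs its $b_j$ argument.

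One concrete error: the closing remark that the lemma ``follows at once from Coifman--Rochberg--Weiss since the kernel obeys the standard size and H\"older estimates'' is false. The kernel of a dyadic shift is piecewise constant and jumps across dyadic boundaries at every scale: taking $x,x'$ arbitrarily close but on opposite sides of a dyadic hyperplane, and $y$ at distance comparable to the sidelength of the smallest dyadic cube containing $x'$ and $y$, the values $K(x,y)$ and $K(x',y)$ differ by roughly $|x-y|^{-d}$ no matter how small $|x-x'|$ is, so the H\"older (even Dini) regularity in the first variable fails; dyadic shifts satisfy only a dyadic substitute for smoothness, namely the constancy of $h_{I''}$ on strictly smaller dyadic cubes, and that is precisely the ingredient the paper's sharp-function proof uses in place of classical CRW. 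Since you present this only as a side remark, your main argument is unaffected, but the remark should be deleted or corrected.
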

\begin{proof}
Let $x\in\mathbb R$ and $I_0\in\mathcal D$ contain $x$. 
Let $1<s<\infty$ and $I_0^{n}$ be the $n$-th ancestor of $I_0$. 
Then, it holds that
\begin{align*}
[b,\mathbb S](f)(x)&=(b(x)-b_{I_0})\mathbb Sf(x)-\mathbb S((b-b_{I_0})f\chi_{I_0^n})(x)-\mathbb S((b-b_{I_0})f\chi_{(I_0^n)^c})(x).
\end{align*}
It is easy to show that $$\frac{1}{|I_0|}\int_{I_0}|(b(x)-b_{I_0})\mathbb Sf(z)|dz\le C\|b\|_{\mathrm{BMO}}M_{s}^d(\mathbb S(f))(x)$$ and $$\frac{1}{|I_0|}\int_{I_0}|\mathbb S((b-b_{I_0})f\chi_{I_0^n})(z)|dz\le C\|b\|_{\mathrm{BMO}}M_{s}^d(f)(x).$$ As for $\mathbb S((b-b_{I_0})f\chi_{(I_0^n)^c})(x)$, take any $y\in I_0$ and denote $x_{I_{0}}$ to be the center of $I_0$. 
To estimate $\mathbb S((b-b_{I_0})f\chi_{(I_0^n)^c})(x)$ on $I_0$, we only need to treat cubes $I'$ and $I''$ with 
$I''\cap I_0\ne\emptyset$ and $I'\cap(I_0^n)^c\ne\emptyset$. 
Since $I''\subset I_0$ it follows that $I_0^n\supset (I'')^n=I$, which contradicts 
with $I'\cap(I_0^n)^c\ne\emptyset$. So we have $I_0\varsubsetneq  I''$. 
Since $h_{I''}$ is a constant on each child of $I''$, it is a constant on $I_0$. 
Hence we get
\begin{align*}
\mathbb S((b-b_{I_0})f\chi_{(I_0^n)^c})(y)&=
\sum_{I\in \mathcal{D}}\sum_{\substack{I^{'},I^{''}\in
\mathcal{D},I^{'},I^{''}\subset I\\l(I^{'})=2^{-m}l(I),l(I^{''})=2^{-n}l(I)}}
\frac{1}{|I|}
\langle (b-b_{I_0})\chi_{(I_0^{n})^c} f,h_{I^{'}}\rangle h_{I^{''}}(y)
\\
&=\mathbb S((b-b_{I_0})f\chi_{(I_0^n)^c}(x_{I_0}).
\end{align*}
Thus we have
\begin{equation}\label{MDS}
M^{\#}([b,\mathbb S](f))(x)
\le C\|b\|_{\mathrm{BMO}}
\bigl(M_{s}^d(\mathbb S(f))(x)+M_{s}^d(f)(x)\bigr).
\end{equation}
Let $b\in L^\infty(\mathbb R^d)$ and $f\in L^p(\mathbb R^d)$. 
Then, since $[b,\mathbb S](f)(x)=b(x)\mathbb S(f)(x)-\mathbb S(bf)(x)$, 
it follows that $[b,\mathbb S](f)\in L^p(\mathbb R^d)$. By inequality (\ref{MDS}), we see that 
\begin{equation*}
\|[b,\mathbb S](f)\|_p\le C\|b\|_{\mathrm{BMO}}\|f\|_p.
\end{equation*}
Now, letting 
\begin{equation*}
b_j(x)=\begin{cases}
j, &\text{if }b(x)> j,\\
b(x), &\text{if }|b(x)|\le j,\\
-j, &\text{if }b(x)< -j, 
\end{cases}
\end{equation*}
and taking a subsequence (if necessary), we can deduce that  (Cf. \cite{LOPTT}):
\begin{equation*}
\|[b,\mathbb S](f))\|_p\le C\|b\|_{\mathrm{BMO}}\|f\|_p.
\end{equation*}
\end{proof}
Now we turn to the proof of {(iii)}. 
We may assume $b\in C_c^\infty(\mathbb R^d)$ with $\supp b\subset (-1,1)^d$. 
For the sake of simplicity, we only consider the integration on 
$E_k:=[0,\infty)^d\setminus [0,2^k)^d$. Notice that
\begin{align*}
&\int_{E_k}|[b,\mathbb S](f)(x)|^pdx
\\
&=\int_{E_k}\biggl|\sum_{I=[0,2^\ell)^d, \,\ell\ge k}
\sum_{\substack{I^{'},I^{''}\in
\mathcal{D},I^{'},I^{''}\subset I\\l(I^{'})=2^{-m}l(I),l(I^{''})=2^{-n}l(I)}}
\frac{1}{|I|}
\langle bf,h_{I^{'}}\rangle  h_{I^{''}}(x)\biggr|^pdx
\\
&\le C
\int_{E_k}\biggl(\sum_{I=[0,2^\ell)^d, \,\ell\ge k}
\sum_{\substack{I^{'},I^{''}\in
\mathcal{D},I^{'},I^{''}\subset I\\l(I^{'})=2^{-m}l(I),l(I^{''})=2^{-n}l(I)}}
\frac{\|b\|_\infty\|f\chi_{[0,1)^d}\|_1\|h_{I'}\|_\infty\|h_{I''}\|_\infty}{|I|}
\chi_{I^{''}}(x)\biggr)^pdx
\\
&\le C\|b\|_\infty^p\|f\|_p^p
\int_{E_k}\biggl(\sum_{I=[0,2^\ell)^d, \,\ell\ge k}
|I|^{-1}\chi_{I}(x)\biggr)^pdx
\le C\|b\|_\infty^p\|f\|_p^p\times 2^{-(p-1)dk},
\end{align*}
which yields that
\begin{equation*}
\lim_{A\to\infty}
\int_{|x|\ge A}|[b,\mathbb S](f)(x)|^pdx=0,
\end{equation*}
uniformly for $ f$ with $\|f\|_{p}\le 1$. This shows that condition (b) holds. 
\smallskip

Now, we are ready to check condition (c). We rewrite $[b,\mathbb S](f)(x+h)-[b,\mathbb S](f)(x)$ 
in the following way
\begin{align*}
&[b,\mathbb S](f)(x+h)-[b,\mathbb S](f)(x)
\\
&= (b(x+h)-b(x))\mathbb S(f)(x+h)
\\
&\hspace{0.5cm}
+\sum_{I\in \mathcal{D}}\sum_{\substack{I^{'},I^{''}\in
\mathcal{D},I^{'},I^{''}\subset I\\l(I^{'})=2^{-m}l(I),l(I^{''})=2^{-n}l(I)}}
\frac{1}{|I|}
(b(x)-b(x_{I'}))\langle f,h_{I^{'}}\rangle (h_{I^{''}}(x+h)-h_{I^{''}}(x))
\\ 
&\hspace{0.51cm}-\sum_{I\in \mathcal{D}}\sum_{\substack{I^{'},I^{''}\in
\mathcal{D},I^{'},I^{''}\subset I\\l(I^{'})=2^{-m}l(I),l(I^{''})=2^{-n}l(I)}}
\frac{1}{|I|}
\langle (b-b(x_{I'}))f,h_{I^{'}}\rangle\bigl(h_{I^{''}}(x+h)-h_{I^{''}}(x)\bigr)
\\ 
&=:II_1+II_2+II_3.
\end{align*}
The $L^p$ boundedness of $\mathbb S$ yields that 
\begin{equation}\label{eq:SS-cpt-1}
\|II_1\|_p\le C\|\nabla b\|_{\infty}\|f\|_{p}|h|.
\end{equation}
For $I,I^{'},I^{''}\in \mathcal{D}$ satisfying 
$|I|\ge1,I\cap(-1,1)^d\ne\emptyset$ and $I',I''\subset I$, 
$l(I^{'})=2^{-m}l(I),l(I^{''})=2^{-n}l(I)$, we have
\begin{align*}
&\biggl(\int\biggl|
\frac{1}{|I|}(b(x)-b(x_{I'}))\langle f,h_{I^{'}}\rangle 
(h_{I^{''}}(x+h)-h_{I^{''}}(x))\biggr|^pdx\biggr)^{1/p}
\\
&\le C\|b\|_{\infty} |I|^{-1}\|f\chi_{I'}\|_{p}{|I'|^{1/p'}}\|h_{I'}\|_{\infty}
\|h_{I''}\|_{\infty}
\bigl({|h|}{|I|^{(d-1)/d}}\bigr)^{1/p}
\\
&\le C\|b\|_{\infty}\|f\|_{p}{|h|^{1/p}}{|I|^{-1/(dp)}}.
\end{align*}
Thus, noting that $\supp b\subset (-1,1)^d$, we get 
\begin{align*}
&\biggl(\int\biggl|\sum_{\substack{I\in \mathcal{D},|I|\ge1,\\
I\cap(-1,1)^d\ne\emptyset}}
\sum_{\substack{I^{'},I^{''}\in
\mathcal{D},I^{'},I^{''}\subset I\\l(I^{'})=2^{-m}l(I),l(I^{''})=2^{-n}l(I)}}\frac{1}{|I|}(b(x)-b(x_{I'}))\langle f,h_{I^{'}}\rangle \\&\quad\times
(h_{I^{''}}(x+h)-h_{I^{''}}(x))\biggr|^pdx\biggr)^{1/p}
\\
&\le 
C\|b\|_{\infty}\|f\|_{p}|h|^{1/p}\cdot
2^{(m+n)d}\sum_{I=[0,2^\ell)^d, \ell\in\mathbb N}|I|^{-1/(dp)}
\le C\|b\|_{\infty}\|f\|_{p}{|h|^{1/p}}.
\end{align*}
\par\smallskip
Next, we treat the case $|h|^d<|I|<1$. 
For every $\ell\in\mathbb N$, suing the support of 
$h_I^{\sigma(\vec\alpha)}(x+h)-h_I^{\sigma(\vec\alpha)}(x)$ and by H\"older's inequality, 
we get
\begin{align*}
&\biggl(\int\biggl|\sum_{\substack{I\in \mathcal{D},|h|^d<|I|=2^{-d\ell},\\
I\cap(-1,1)^d\ne\emptyset}}
\sum_{\substack{I^{'},I^{''}\in
\mathcal{D},I^{'},I^{''}\subset I\\l(I^{'})=2^{-m}l(I),l(I^{''})=2^{-n}l(I)}}
\frac{1}{|I|}(b(x)-b(x_{I'}))\\&\quad\times \langle f,h_{I^{'}}\rangle 
(h_{I^{''}}(x+h)-h_{I^{''}}(x))\biggr|^pdx\biggr)^{1/p}
\\
&\le C{\|\nabla b\|_{\infty}}
\biggl(\int\biggl|\sum_{\substack{I\in \mathcal{D},|h|^d<|I|=2^{-d\ell},\\
I\cap(-1,1)^d\ne\emptyset}}
\sum_{\substack{I^{''}\in
\mathcal{D},I^{''}\subset I\\l(I^{''})=2^{-n}l(I)}}
{|I|^{\frac1d-\frac1p}}\|f\chi_{I}\|_{p}
\frac{|h_{I^{''}}(x+h)-h_{I^{''}}(x)|}{\|h_{I^{''}}\|_{\infty}}\biggr|^pdx
\biggr)^{1/p}
\\
&\le C{\|\nabla b\|_{\infty}}
\biggl(\sum_{\substack{I\in \mathcal{D},|h|^d<|I|=2^{-d\ell},\\
I\cap(-1,1)^d\ne\emptyset}}
{|I|^{\frac{p}d-1}}\|f\chi_{I}\|_{p}^p|h||I|^{\frac{d-1}{d}}
\biggr)^{1/p}
\\ 
&\le 
 C2^{-\ell(1-1/p)}\|\nabla b\|_{\infty}\|f\|_{p}{|h|^{1/p}}.
\end{align*}
Thirdly, noting that $|I|^{1/d}<|h|$ implies $|h|/|I|^{1/d}>1$, we get

\begin{align*}
&\biggl(\int\biggl|\sum_{\substack{I\in \mathcal{D},|I|=2^{-d\ell}\le|h|^d<1,\\
I\cap(-1,1)^d\ne\emptyset}}
\sum_{\substack{I^{'},I^{''}\in
\mathcal{D},I^{'},I^{''}\subset I\\l(I^{'})=2^{-m}l(I),l(I^{''})=2^{-n}l(I)}}
\frac{1}{|I|}(b(x)-b(x_{I'}))\\&\quad \times\langle f,h_{I^{'}}\rangle 
(h_{I^{''}}(x+h)-h_{I^{''}}(x))\biggr|^pdx\biggr)^{1/p}
\\ 
&\le C{\|\nabla b\|_{\infty}}
\biggl(\sum_{\substack{I\in \mathcal{D},|I|=2^{-d\ell}\le|h|^d<1,\\
I\cap(-1,1)^d\ne\emptyset}}
|I|^{\frac{p}{d}}\frac{|h|}{|I|^{\frac1 d}}\|f\chi_{I}\|_{p}^p
\biggr)^{1/p}
\\ 
&\le C2^{-\ell(1-1/p)}{\|\nabla b\|_{\infty}}|h|^{1/p}
\biggl(\sum_{\substack{I\in \mathcal{D},|I|=2^{d\ell}\le|h|^d<1,\\
I\cap(-1,1)^d\ne\emptyset}}
\|f\chi_{I}\|_{p}^p
\biggr)^{1/p}
\\ 
&\le 
 C2^{-\ell(1-1/p)}\|\nabla b\|_{\infty}\|f\|_{p}{|h|^{1/p}}.
\end{align*}
Hence
\begin{align*}
&\biggl(\int\biggl|\sum_{\substack{I\in \mathcal{D},|I|<1,\\
I\cap(-1,1)^d\ne\emptyset}}
\sum_{\substack{I^{'},I^{''}\in
\mathcal{D},I^{'},I^{''}\subset I\\l(I^{'})=2^{-m}l(I),l(I^{''})=2^{-n}l(I)}}
\frac{1}{|I|}(b(x)-b(x_{I'}))\\&\quad \times \langle f,h_{I^{'}}\rangle 
(h_{I^{''}}(x+h)-h_{I^{''}}(x))\biggr|^pdx\biggr)^{1/p}
\\
&\le \sum_{\ell=1}^{\infty}C
2^{-\ell(1-1/p)}\|\nabla b\|_{\infty}\|f\|_{p}{|h|^{1/p}}
\\
&\le 
 C\|\nabla b\|_{\infty}\|f\|_{p}|h|^{1/p}.
\end{align*}
Thus, we obtain that $\|II_2\|_{p}\le C\|\nabla b\|_{\infty}\|f\|_{p}|h|^{1/p}$.
Similarly we have the same estimate for $\|II_3\|_{p}$. 


\par\smallskip 

Therefore, we have shown that $[b,\mathbb S]$ satisfies conditions (a)-(c) and $[b,\mathbb S]$ is a compact operator.
\qed


\end{document}